\documentclass[11pt, a4paper]{article}

\usepackage{latexsym}
\usepackage{amsmath, epsfig}
\usepackage{color}
\usepackage{graphicx, float}
\usepackage{amssymb,amsfonts,amsthm,amsmath,graphicx,url}
\usepackage{authblk}
\usepackage{enumerate}

\definecolor{VeryLightBlue}{rgb}{0.9,0.9,1}
\definecolor{LightBlue}{rgb}{0.8,0.8,1}
\definecolor{MidBlue}{rgb}{0.5,0.5,1}
\definecolor{DarkBlue}{rgb}{0,0,0.6}
\definecolor{Blue}{rgb}{0,0,1}
\definecolor{Gold}{rgb}{1,0.843,0}
\definecolor{LightGreen}{rgb}{0.88,1,0.88}
\definecolor{MidGreen}{rgb}{0.6,1,0.6}
\definecolor{DarkGreen}{rgb}{0,0.6,0}
\definecolor{VeryLightYellow}{rgb}{1,1,0.9}
\definecolor{LightYellow}{rgb}{1,1,0.6}
\definecolor{MidYellow}{rgb}{1,1,0.5}
\definecolor{DarkYellow}{rgb}{1,1,0.2}
\definecolor{DarkPurple}{rgb}{.6,0,1}
\definecolor{Red}{rgb}{1,0,0}
\definecolor{VeryLightRed}{rgb}{1,0.9,0.9}
\definecolor{LightRed}{rgb}{1,0.8,0.8}
\definecolor{MidRed}{rgb}{1,0.55,0.55}

\long\def\delete#1{}

\usepackage[normalem]{ulem}

\newtheorem{theorem}{Theorem}[section]
\newtheorem{lemma}[theorem]{Lemma}
\newtheorem{corollary}[theorem]{Corollary}
\newtheorem{proposition}[theorem]{Proposition}

\theoremstyle{definition}

\newtheorem{remark}{Remark}[section]

\input amssym.def
\input amssym.tex

\newcommand{\bmat}[1]{\begin{bmatrix}#1\end{bmatrix}}

\newcommand{\be}{\begin{equation}}
\newcommand{\ee}{\end{equation}}
\newcommand{\bea}{\begin{eqnarray}}
\newcommand{\eea}{\end{eqnarray}}
\newcommand{\bean}{\begin{eqnarray*}}
\newcommand{\eean}{\end{eqnarray*}}

\def\qed{\hfill$\Box$\vspace{11pt}}

\def\la{\langle}
\def\ra{\rangle}

\def\Id{{\mathrm{Id}}}
\def\id{{\mathrm{id}}}

\def\CCC{\mathbb{C}}

\def\ZZZ{\mathbb{Z}}

\def\RRR{\mathbb{R}}

\def\QQ{{\cal Q}}

\def\b0{{\bf 0}}

\def\Ga{\Gamma}

\def\a{\alpha}
\def\b{\beta}
\def\de{\delta}
\def\d{\mathbf{1}}

\def\l{\lambda}

\def\vp{\varphi}

\def\Sym{{\rm Sym}}
\def\Inn{{\rm Inn}}

\def\Cay{{\rm Cay}}
\def\Aut{{\rm Aut}}

\def\SL{{\rm SL}}

\def\det{{\rm det}}

\def\Ker{{\rm Ker}}

\def\Cay{{\rm Cay}}
\def\rank{\mathrm{rank}}

\title{Equitable partitions of regular graphs, and perfect sets in normal Cayley graphs}

\author[1]{R. A. Bailey}
\author[1]{Peter J. Cameron}
\author[2]{Sanming Zhou}

\affil[1]{\small School of Mathematics and Statistics, University of St Andrews, St Andrews, Fife KY16 9SS, UK}
\affil[2]{\small School of Mathematics and Statistics, The University of Melbourne, Parkville, VIC 3010, Australia}

\date{}

\begin{document}
\openup 0.5\jot
\maketitle

\renewcommand{\thefootnote}{\empty}%{footnote}}
\footnotetext{E-mail addresses: rab24@st-andrews.ac.uk (R. A. Bailey), pjc20@st-andrews.ac.uk (P. J. Cameron), sanming@unimelb.edu.au (S. Zhou)}

\begin{abstract}
An equitable partition of a graph $\Ga$ is a partition $\{V_1, \ldots, V_m\}$ of its vertex set such that for each pair $i, j$ all vertices in $V_i$ have the same number of neighbours in $V_j$. When $m=2$, $V_1$ is called an $(a, b)$-perfect set in $\Ga$, where $a$ is the number of neighbours in $V_1$ of each vertex in $V_1$, and $b$ is the number of neighbours in $V_1$ of each vertex in $V_2$. In this paper we first derive general necessary conditions for a regular graph to admit two equitable partitions. As a corollary we obtain necessary conditions for the existence of an $(a,b)$-perfect set  in a regular graph in terms of an arbitrary equitable partition. With the help of these results we then obtain necessary conditions for the existence of an $(a,b)$-perfect set in a normal Cayley graph in terms of the irreducible characters of the underlying group.

{\em Key words}: Equitable partition, perfect set, perfect code, Cayley graph

{\em AMS Subject Classification (2020)}: 05C25, 05C69, 94B99
\end{abstract}

\section{Introduction}
\label{sec:intro}

All groups considered in this paper are finite, and all graphs considered are finite and undirected with no loops or parallel edges. Let $\Ga = (V, E)$ be a graph. A partition 
$$
\pi=\{V_1, \ldots, V_m\}
$$
of $V$ is called an \emph{equitable partition} \cite{GR} of $\Ga$ if there exists an $m \times m$ matrix 
$$
M_{\pi} = (b_{ij})_{1 \le i, j \le m},
$$
called the \emph{quotient matrix} of $\pi$, such that for any $i, j$ with $1 \le i, j \le m$, every vertex in $V_i$ has exactly $b_{ij}$ neighbours in $V_j$. In the literature an equitable partition is also called a \emph{perfect colouring} \cite{F2007} or, more specifically, \emph{perfect $m$-colouring} to specify the number of colour classes. In particular, if $\{V_1, V_2\}$ is a perfect $2$-colouring of $\Ga$ with quotient matrix $(b_{ij})_{1 \le i, j \le 2}$, then $V_1$ is called a \emph{$(b_{11}, b_{21})$-perfect set} \cite{BCGG2019} or \emph{$(b_{11}, b_{21})$-regular set} \cite{Cardoso2019, WXZ23, WXZ24} in $\Ga$ (so $V_2$ is a $(b_{22}, b_{12})$-perfect set or $(b_{22}, b_{12})$-regular set in $\Ga$). Thus, if $\Ga$ is $k$-regular and connected, and $a$ and $b$ are integers with $0 \le a \le k-1$ and $1 \le b \le k$, then a nonempty proper subset $V_1$ of $V$ is an $(a,b)$-perfect set in $\Gamma$ if and only if $\{V_1,V_2\}$ is an equitable partition of $\Ga$ with quotient matrix
\be
\label{eq:qtabd}
\bmat{a & k-a\\b & k-b},
\ee
where $V_2 = V \setminus V_1$. It is clear that $|V_1| \ge b$, $|V_2| \ge k-a$,  and
\be
\label{eq:abd}
(k-a) |V_1| = b |V_2|. 
\ee
Thus, if $\Ga$ admits an $(a, b)$-perfect set, then $|V| \ge k-a+b$ and moreover $k-a+b$ is a common divisor of $(k-a)|V|$ and $b|V|$ (as \eqref{eq:abd} can be written as $(k-a+b)|V_1| = b|V|$ or $(k-a+b) |V_2| = (k-a)|V|$). In the literature a $(0, 1)$-perfect set is called a \emph{perfect $1$-code} \cite{Big, G1993}, an \emph{efficient dominating set} \cite{DS03} or \emph{independent perfect dominating set} \cite{Lee01}, and a $(1, 1)$-perfect set is called a \emph{total perfect code} \cite{Zhou2016} or an \emph{efficient open dominating set} \cite{HHS98}. In this paper, we use the term \emph{perfect code} to mean a perfect 1-code.

Historically, equitable partitions are associated with distance-regular graphs and completely regular codes (see, for example, \cite{G1993,GR,K11,KP25,M03}). They are also related to experimental designs with commutative
orthogonal block structure \cite{BCFFN24}. Assume that $\Ga$ is connected. If $u \in V$, let $\Ga_{i}(u)$ denote the set of vertices of $\Ga$ at distance $i$ from $u$. Obviously, $\Ga_{0}(u) = \{u\}$ and $\Ga_{1}(u)$ (denoted by $\Ga(u)$) is the neighbourhood of $u$. The \emph{distance partition} of $\Ga$ with respect to $u$, denoted by $\pi_d(u)$, is the partition $\{\Ga_{i}(u): i \ge 0\}$ of $V$. In general, for $C \subseteq V$, define $C_i$ to be the set of vertices of $\Ga$ at distance $i$ from $C$, where the distance from a vertex $v \in V$ to $C$ is the minimum distance between $v$ and a vertex in $C$. If $\rho$ is the \emph{covering radius} of $C$, namely the least integer $i$ such that every vertex in $\Ga$ is at distance no more than $i$ to at least one vertex in $C$, then the partition $\pi_d(C) = \{C_0, C_1, \ldots, C_{\rho}\}$ (where $C_0 = C$) of $V$ is called the \emph{distance partition} of $C$. (In particular, $\pi_d(\{u\})$ is $\pi_d(u)$ for each $u \in V$.) Assume further that $\Ga$ is $k$-regular with diameter $d$. If $\pi_d(C)$ is an equitable partition of $\Ga$, then $C$ is said to be a \emph{completely regular code} \cite{Neu92} in $\Ga$. If $C$ is a completely regular code, then $M_{\pi_d(C)}$ is a tridiagonal matrix. A proper subset $C$ of $V$ is a completely regular code in $\Ga$ with covering radius $1$ if and only if it is an $(a, b)$-regular set in $\Ga$ for some $0 \le a \le k-1$ and $1 \le b \le k$. It is widely known \cite{G1993, Neu92} that $\Ga$ is distance-regular if and only if $\pi_d(u)$ is an equitable partition for every vertex $u \in V$. If $\Ga$ is distance-regular with intersection array
$$
\bmat{b_0 & b_1 & \ldots & b_{d-1} & b_d = 0 \\
c_0 = 0 & c_1 & \ldots & c_{d-1} & c_d},
$$
then $M_{\pi_d(u)}$ is the tridiagonal matrix with main diagonal entries $a_0, a_1, \ldots, a_{d-1}, a_d$, upper diagonal entries $b_0, b_1, \ldots, b_{d-1}$, and lower diagonal entries $c_1, \ldots, c_{d-1}, c_d$, where $b_0 = k, c_1 = 1$ and $a_i = k - b_i - c_i$ for $0 \le i \le d$. 

It is known that, for any equitable partition $\pi$ of a graph $\Ga = (V, E)$, all eigenvalues of $M_{\pi}$ are eigenvalues of $\Gamma$ (see \cite[Theorem 9.3.3]{GR}). Now assume that $\Gamma$ is connected and $k$-regular. Then $k$ is a simple eigenvalue of $M_{\pi}$, and the equitable partition $\pi$ of $\Gamma$ is said to be \emph{$\mu$-equitable} \cite{BCGG2019} if all eigenvalues of $M_{\pi}$ other than $k$ are equal to $\mu$. In particular, if an equitable partition $\{V_1, V_2\}$ into two parts is $\mu$-equitable, then the nonempty proper subset $V_1$ of $V$ is called a \emph{$\mu$-perfect set} \cite{BCGG2019} in $\Ga$. It was proved in \cite[Proposition 2.1]{BCGG2019} that, for a partition $\pi = \{V_1, V_2, \dots, V_m\}$ of $V$, if $\pi$ is $\mu$-equitable, then each $V_i$ is $\mu$-perfect, and conversely if $V_1, V_2, \dots, V_{m-1}$ are all $\mu$-perfect, then $\pi$ is $\mu$-equitable. Thus, it is especially important to study equitable partitions with exactly two parts (which are essentially perfect sets). Since the eigenvalues of the matrix in \eqref{eq:qtabd} are $k$ and $a-b$, if $\Ga$ admits an $(a, b)$-perfect set, then $a-b$ is an eigenvalue of $\Ga$. In \cite{GG2013}, perfect $2$-colourings of Johnson graphs $J(v,3)$ were classified for odd $v$, and some recent results on perfect $2$-colourings of Hamming graphs can be found in \cite{BKMV2021, MV2020}. In \cite{BCGG2019}, equitable partitions of Latin square graphs are studied and those whose quotient matrix does not have an eigenvalue $-3$ are classified. 

In recent years, perfect sets, and in particular perfect codes, in Cayley graphs have received considerable attention. Given a group $G$ with identity element $1$ and an inverse-closed subset $S$ of $G \setminus \{1\}$, the \emph{Cayley graph} $\Cay(G,S)$ of $G$ with respect to the \emph{connection set} $S$ is the graph with vertex set $G$ such that $x, y \in G$ are adjacent if and only if $yx^{-1}\in S$. Obviously, $\Cay(G,S)$ is an undirected, simple and $|S|$-regular graph, and it is connected if and only if $\la S \ra = G$; and $G$ acts by right
multiplication as a group of automorphisms of $\Cay(G,S)$.

A Cayley graph $\Cay(G,S)$ is called \emph{normal} if $S$ is closed under conjugation, or equivalently, $S$ is the union of some conjugacy classes of $G$. (Note that this definition is one of the two widely used notions of the normality of a Cayley graph. The other one says that a Cayley graph $\Cay(G,S)$ is normal if the right regular representation of $G$ is normal in the full automorphism group of $\Cay(G,S)$. These definitions are quite different, but in this paper we use the one given at the start of this paragraph.) If $\Cay(G,S)$ is a normal Cayley graph, then both the left and right actions of $G$ are automorphisms.

The Hamming graph $H(n, q)$ and the Cartesian product $L(n, q)$ of $n$ copies of $C_q$ (the cycle of length $q$) are both Cayley graphs of $\ZZZ_q^n$, and the Hamming and Lee metrics over $q$-ary words of length $n$ are exactly the graph distances in $H(n, q)$ and $L(n, q)$, respectively. Thus, perfect codes in $H(n, q)$ and $L(n, q)$ are precisely perfect $1$-codes under the Hamming and Lee metrics, respectively. So perfect codes in Cayley graphs can be considered as a generalisation of perfect codes \cite{Heden1, Va75} in coding theory. Moreover, perfect codes in Cayley graphs are essentially factorisations \cite{SS2009} of the underlying groups. In fact, for subsets $X, Y$ of $G$ with $1 \in X \cap Y$, if $X$ is inverse-closed, then $Y$ is a perfect code in $\Cay(G, X \setminus \{1\})$ if and only if $G = XY$ is a factorisation of $G$ (that is, every element of $G$ can be written uniquely as $xy$ with $x \in X$ and $y \in Y$). Because of this, perfect codes in Cayley graphs have attracted considerable attention over the years; see \cite[Section 1]{HXZ18} for a brief account of results and \cite{CWX2020, FHZ, HXZ18, MWWZ2019, ZZ2021, Zhou2016, Z15} for several recent papers. As early as 1966, Rothaus and Thompson \cite{RT1966} proved (in the language of the present paper) that the complete transposition graph on $S_n$ (that is, the Cayley graph of the symmetric group $S_n$ with connection set consisting of all transpositions in $S_n$) does not admit any perfect code if $1 + (n(n-1)/2)$ is divisible by a prime exceeding $2 + \sqrt{n}$. The reader is referred to \cite{WXZ23, WXZ24} for two recent studies of perfect sets in Cayley graphs. 

In this paper we study equitable partitions of regular graphs with an emphasis on perfect sets in normal Cayley graphs. In the next section we give necessary conditions for a regular graph to admit two equitable partitions (see Theorem \ref{thm:2equ}). This general result can be applied to various pairs of (not necessarily distinct) equitable partitions such as the distance partition of a distance-regular graph, the distance partition of a completely regular set, etc. As a special case we obtain necessary conditions for the existence of an $(a,b)$-perfect set in terms of an arbitrary equitable partition in a regular graph (see Corollary \ref{coro:2equ-ps}). In another special case we obtain linear-algebraic and spectral conditions for the existence of an equitable partition of a regular graph (see Corollary \ref{coro:2equ-tr}). With the help of these results we obtain in Section \ref{sec:eqCay} two sets of necessary conditions for the existence of an $(a,b)$-perfect set in a normal Cayley graph in terms of the irreducible characters of the underlying group (see Theorems \ref{thm:nec-cond} and \ref{thm:nec-cd}). In the special case when $(a, b) = (0,1)$ or $(1,1)$, these results recover two results in \cite{E87} and two results in \cite{Zhou2016}, respectively. We conclude the paper in Section \ref{sec:rem} by giving lower bounds on the multiplicity of the eigenvalue $a-b$ of a vertex-transitive graph admitting an $(a,b)$-perfect set (see Theorem \ref{thm:vt}) and determining all perfect sets in a particular family of normal Cayley graphs of dihedral groups (see Proposition~\ref{ex:dih}).

\section{Equitable partitions of regular graphs}
\label{sec:eqReg}

In this section we study equitable partitions of regular graphs. A special case of the main result in this section will be used in our study of perfect sets in normal Cayley graphs in the next section. 

Let $\Ga = (V, E)$ be a $k$-regular graph, where $k \ge 1$. As usual, for a subset $X$ of $V$, we use $\Ga[X]$ to denote the subgraph of $\Ga$ induced by $X$. Denote by $\CCC V$ the vector space of functions from $V$ to $\CCC$, with usual addition and scalar multiplication. We may identify each $f \in \CCC V$ with the column vector $[f(v)]_{v \in V}$. Denote by $\d_X \in \CCC V$ the characteristic function of a subset $X$ of $V$. That is, $\d_{X}(v) = 1$ if $v \in X$ and $\d_{X}(v) = 0$ if $v \in V \setminus X$. In particular, $\d_V(v) = 1$ for all $v \in V$ and $\d_V$ is the all-one column vector. For each $v \in V$, write $\d_v$ in place of $\d_{\{v\}}$. Then $\{\d_v: v \in V\}$ is a basis of $\CCC V$ as it is independent and $f = \sum_{v \in V} f(v) \d_v$ for any $f \in \CCC V$. For a partition $\pi = \{V_1, \ldots, V_m\}$ of $V$, define $\CCC \pi$ to be the subspace of $\CCC V$ spanned by $\d_{V_1}, \ldots, \d_{V_m}$. Note that $\dim(\CCC \pi) = m$. Note also that a function in $\CCC V$ which is constant on each part of $\pi$ induces a function in $\CCC \pi$ in a natural way.

Let
$$
A = (a_{uv})_{u, v \in V}
$$ 
be the adjacency matrix of $\Ga$. Define the adjacency linear mapping \cite{E87} 
$$
\varphi: \CCC V \rightarrow \CCC V
$$ 
by
\be
\label{eq:vphi}
\vp(\d_v) := A \d_v = \sum_{u \in V} a_{uv} \d_{u} = \sum_{u \in \Ga(v)} \d_{u}\; \text{for } v \in V.
\ee
Then the matrix of $\vp$ with respect to the basis $\{\d_v: v \in V\}$ of $\CCC V$ is $A$, and $\vp$ maps each column vector $f \in \CCC V$ to $Af$. Denote by $\id$ the identity map from $\CCC V$ to $\CCC V$. 

\begin{lemma}
[{\cite[Lemma 2(ii)]{E87}, \cite[Lemma 9.3.2]{GR}}] 
\label{lem:equi-part}
Let $\Ga = (V, E)$ be a $k$-regular graph and $\pi = \{V_1, \ldots, V_m\}$ a partition of $V$. Then $\pi$ is an equitable partition of $\Ga$ if and only if $\CCC \pi$ is invariant under the linear mapping $\vp$. 
\end{lemma}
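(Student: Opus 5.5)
The plan is to compute $\vp(\d_{V_j})$ explicitly for each part $V_j$ and then compare it with the definition of an equitable partition. By linearity and \eqref{eq:vphi},
$$
\vp(\d_{V_j}) = \sum_{v \in V_j} \vp(\d_v) = \sum_{v\in V_j}\sum_{u \in \Ga(v)} \d_u = \sum_{u \in V} |\Ga(u)\cap V_j|\, \d_u .
$$
The last equality holds because $u$ lies in $\Ga(v)$ exactly when $v \in \Ga(u)$, since $\Ga$ is undirected. So $\vp(\d_{V_j})$ is the function $u \mapsto |\Ga(u)\cap V_j|$, the number of neighbours of $u$ in $V_j$.

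Next I would record the elementary description of $\CCC\pi$. The vectors $\d_{V_1},\ldots,\d_{V_m}$ have disjoint nonempty supports, so they are linearly independent. A function $f\in\CCC V$ lies in their span if and only if $f$ is constant on each part $V_i$, in which case $f=\sum_i c_i \d_{V_i}$ with $c_i$ the common value of $f$ on $V_i$.

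For the forward direction, suppose $\pi$ is equitable with quotient matrix $M_\pi=(b_{ij})$. Then the function $u\mapsto|\Ga(u)\cap V_j|$ takes the value $b_{ij}$ on $V_i$, so $\vp(\d_{V_j}) = \sum_i b_{ij}\d_{V_i}\in\CCC\pi$. Since the $\d_{V_j}$ span $\CCC\pi$, linearity gives $\vp(\CCC\pi)\subseteq \CCC\pi$.

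For the converse, suppose $\CCC\pi$ is $\vp$-invariant. Then each $\vp(\d_{V_j})$ lies in $\CCC\pi$, hence is constant on every $V_i$; call this constant $b_{ij}$. By the computation above, every vertex of $V_i$ has exactly $b_{ij}$ neighbours in $V_j$, so $\pi$ is equitable with quotient matrix $(b_{ij})$.

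There is no real obstacle here. The only point needing care is the index swap in the double sum, which uses the symmetry of $A$, i.e.\ that $\Ga$ is undirected. Regularity of $\Ga$ is not actually needed for this argument.
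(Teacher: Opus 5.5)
Your proof is correct and takes essentially the paper's route. The paper cites this lemma from Etienne and Godsil--Royle without proving it, but the computation it does carry out in the proof of Lemma~\ref{lem:ps} is exactly yours: it shows $\vp(\d_{V_j})(u)=|\Ga(u)\cap V_j|$ and compares this with $\sum_l b_{lj}\d_{V_l}$, which together with your observation that $\CCC\pi$ consists of the functions constant on each part gives the lemma (and you are right that regularity is never used).
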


In what follows, we assume that $\pi = \{V_1, \ldots, V_m\}$ is an equitable partition of $\Ga$ with quotient matrix $M_{\pi} = (b_{ij})_{1 \le i,j \le m}$. Then $b_{ij} = |\Ga(v) \cap V_j|$ for all $v \in V_i$. Hence the entries in each row of $M_\pi$ add up to $k$. Denote by $\vp_{\pi}$ and $\Id_{\pi}$ the restriction of $\vp$ and $\id$ to $\CCC \pi$, respectively. Then the matrix of $\vp_{\pi}$ with respect to the basis $\{\d_{V_1}, \ldots, \d_{V_m}\}$ of $\CCC \pi$ is exactly $M_{\pi}$. In fact, for $1 \le j \le m$, we have
\begin{eqnarray*}
\vp_{\pi}(\d_{V_j})
&=& \sum_{v \in V_j} \vp(\d_{v})
= \sum_{v \in V_j} \sum_{u \in V} a_{uv} \d_{u}\\
&=& \sum_{v \in V_j} \sum_{i=1}^m \sum_{u \in V_i} a_{uv} \d_{u}
= \sum_{i=1}^m \sum_{u \in V_i} \sum_{v \in V_j} a_{uv} \d_{u}\\
&=& \sum_{i=1}^m \sum_{u \in V_i} b_{ij} \d_{u}
= \sum_{i=1}^m b_{ij} \d_{V_i}. 
\end{eqnarray*}

From now on, for $v$ in $V$, we use the notation $\pi[v]$ to denote the part of $\pi$ containing $v$.  Define
$$
p: \CCC V \rightarrow \CCC \pi
$$
by 
$$
p(f) = \sum_{i=1}^{m} \left(\frac{\sum_{v \in V_i}f(v)}{|V_i|}\right) \d_{V_i}\; \text{for } f \in \CCC V.
$$
Obviously, $p$ is a linear mapping from $\CCC V$ to $\CCC \pi$. Let $P$ be the matrix of $p$ with respect to the basis $\{\d_v: v \in V\}$ of $\CCC V$. Then
$$
p(\d_{v}) = \sum_{i=1}^{m} \left(\frac{\sum_{w \in V_i}\d_{v}(w)}{|V_i|}\right) \d_{V_i} = \frac{1}{|\pi[v]|} \d_{\pi[v]} = \sum_{u \in \pi[v]}\frac{1}{|\pi[u]|} \d_{u} = \sum_{u \in V}\frac{\d_{\pi[u]}(v)}{|\pi[u]|} \d_{u}.
$$
Thus, the $(u, v)$-entry of $P$ is given by 
$$
P_{uv} = \frac{\d_{\pi[u]}(v)}{|\pi[u]|}.
$$  
Using this and the assumption that $\pi$ is equitable, one can verify (see, for example, \cite{E87}) that $PA = AP$. In other words, 
\be
\label{eq:pvp}
p \vp = \vp p.
\ee    

\begin{lemma}
\label{lem:ps}
Let $\Ga = (V, E)$ be a $k$-regular graph, where $k \ge 1$. A partition $\pi = \{V_1, \ldots, V_m\}$ of $V$ is an equitable partition of $\Ga$ with quotient matrix $M_{\pi} = (b_{ij})_{1 \le i, j \le m}$ if and only if  
\begin{equation}
\label{eq:ns0}
\vp(\d_{V_j}) = \sum_{i=1}^m b_{ij}\d_{V_i} \hbox{ for }1 \le j \le m. 
\end{equation}
\end{lemma}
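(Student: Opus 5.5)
The plan is to prove both directions by evaluating the vector $\vp(\d_{V_j})$ coordinatewise, since it counts neighbours. By \eqref{eq:vphi} and linearity,
$$
\vp(\d_{V_j}) = \sum_{v \in V_j} \sum_{u \in \Ga(v)} \d_u = \sum_{u \in V} |\Ga(u) \cap V_j|\, \d_u .
$$
Here we swap the order of summation and use that $u \in \Ga(v)$ if and only if $v \in \Ga(u)$, the graph being undirected. So the $u$-coordinate of $\vp(\d_{V_j})$ is exactly the number of neighbours of $u$ in $V_j$. This identity holds for every partition; equitability is not needed.

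For the forward direction, suppose $\pi$ is equitable with quotient matrix $M_\pi=(b_{ij})$. Then $|\Ga(u)\cap V_j| = b_{ij}$ whenever $u \in V_i$. Grouping the sum above by the parts $V_i$ gives $\sum_{i} b_{ij}\d_{V_i}$, which is \eqref{eq:ns0}. This is just the computation already displayed before the lemma.

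For the converse, suppose \eqref{eq:ns0} holds for some numbers $b_{ij}$. Take $u \in V_i$ and compare the $u$-coordinates of the two sides. Since the parts are disjoint, the right-hand side has $u$-coordinate $b_{ij}$. The left-hand side has $u$-coordinate $|\Ga(u)\cap V_j|$. Hence every vertex of $V_i$ has exactly $b_{ij}$ neighbours in $V_j$. By definition, $\pi$ is then equitable with quotient matrix $(b_{ij})$.

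There is no real obstacle here. The only points needing care are the symmetry $a_{uv}=a_{vu}$ and the disjointness of the parts, which makes the coefficients in \eqref{eq:ns0} uniquely determined by the vector $\vp(\d_{V_j})$.
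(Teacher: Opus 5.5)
Your proof is correct and follows the paper's argument. Both compute the $u$-coordinate of $\vp(\d_{V_j})$ as $|\Ga(u)\cap V_j|$ and compare it with the $u$-coordinate $b_{ij}$ of $\sum_{l} b_{lj}\d_{V_l}$ for $u \in V_i$, from which both directions follow immediately.
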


\begin{proof}
For each $1 \le j \le m$ and any $u \in V$, by \eqref{eq:vphi} we have 
\begin{eqnarray*}
\vp(\d_{V_j})(u) &=& \left(\sum_{v \in V_j} \vp(\d_v)\right)(u) = \sum_{v \in V_j} \vp(\d_v)(u)\\
&=& \sum_{v \in V_j} \left(\sum_{w \in \Ga(v)} \d_{w}\right)(u) = \sum_{v \in V_j}a_{uv} = |\Ga(u) \cap V_j|.
\end{eqnarray*}
Also, if $u \in V_i$, then 
\[\left(\sum_{l=1}^m b_{lj}\d_{V_l}\right)(u) = \sum_{l=1}^m b_{lj}\d_{V_l}(u) = b_{ij}.\] The result follows immediately from these equations and the definition of an equitable partition. 
\qed
\end{proof}

In the case when $m = 2$, Lemma \ref{lem:ps} gives the trivial result that a subset $V_1\subseteq V$ is an $(a, b)$-perfect set in $\Ga$ if and only if $\vp(\d_{V_1}) = a \d_{V_1} + b \d_{V_2}$ (or, equivalently, $\vp(\d_{V_2}) = (k-a) \d_{V_1} + (k-b) \d_{V_2}$), where $V_2=V\setminus V_1$. 

In the following we assume that
$$
\tau = \{W_1, \ldots, W_n\}
$$
is a second equitable partition of $\Ga$ with quotient matrix
$$
M_{\tau} = (c_{ij})_{1 \le i, j \le n}.
$$
For $1 \le i, j \le n$, we have $k+c_{ij}-c_{jj} \ge 0$ as $k-c_{jj} \ge 0$ and $c_{ij} \ge 0$. Moreover, $k+c_{ij}-c_{jj} = 0$ if any only if $i \ne j$ and the subgraph $\Ga[W_j]$ of $\Ga$ induced by $W_j$ is a connected component of $\Ga$. In all other cases, we will use the notation
$$
r_{ij}=\frac{c_{ij}}{k+c_{ij}-c_{jj}}\;\, \text{for } 1\le i,j\le n.
$$
As usual, denote by $\de$ the Kronecker delta.

The main result in this section is as follows. 

\begin{theorem}
\label{thm:2equ}
Let $\Ga = (V, E)$ be a $k$-regular graph, where $k \ge 1$. Let $\pi = \{V_1, \ldots, V_m\}$ and $\tau = \{W_1, \ldots, W_n\}$ be (not necessarily distinct) equitable partitions of $\Ga$, with quotient matrices $M_{\pi} = (b_{ij})_{1 \le i,j \le m}$ and $M_{\tau} = (c_{ij})_{1 \le i, j \le n}$, respectively. Then, for any $1 \le i, j \le n$ with $i \ne j$ such that $\Ga[W_j]$ is not a connected component of $\Ga$, the following hold:
\begin{enumerate}[\rm (a)]
\item
the system of linear equations
\be
\label{eq:2equ}
(M_{\tau} + (c_{ij} - c_{jj}) I_n) \bmat{x_1\\ \vdots \\ x_n} = \bmat{(c_{1j} - c_{ij}) + \de_{1j}(c_{ij} - c_{jj}) \\ \vdots \\ (c_{nj} - c_{ij}) + \de_{nj}(c_{ij} - c_{jj})}
\ee
is consistent; 
\item 
for any solution $(d_1, \ldots, d_n)$ to \eqref{eq:2equ}, the vector $[h_1, \ldots, h_m]^\top$ defined by
\be
\label{eq:kt}
h_t = - r_{ij} - \sum_{l = 1}^n \frac{|V_t \cap W_l|}{|V_t|}(d_l - \de_{lj})\;\, \text{for } 1 \le t \le m
\ee
is a solution to the homogeneous system of linear equations
\be
\label{eq:2equhom}
(M_{\pi} + (c_{ij} - c_{jj}) I_m) \bmat{y_1\\ \vdots \\ y_m} = \bmat{0 \\ \vdots \\ 0};
\ee
\item
in particular, either $(h_1, \ldots, h_m) = (0, \ldots, 0)$ or $c_{jj} - c_{ij}$ is an eigenvalue of $M_{\pi}$ (and hence an eigenvalue of $\Ga$) with $[h_1, \ldots, h_m]^\top$ as a corresponding eigenvector. 
\end{enumerate}
\end{theorem}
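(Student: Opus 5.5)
The plan is to translate everything into statements about vectors in $\CCC V$ and the operator $\vp + \lambda\,\id$, where $\lambda := c_{ij}-c_{jj}$. Throughout I use Lemma \ref{lem:ps} for both partitions, and the commutation relation \eqref{eq:pvp}.

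First I record how $\vp+\lambda\,\id$ acts on $\CCC\tau$. By Lemma \ref{lem:ps} applied to $\tau$, if $f=\sum_{l=1}^n x_l \d_{W_l}$, then
\[
(\vp+\lambda\,\id)(f)=\sum_{s=1}^n\Big(\sum_{l=1}^n c_{sl}x_l+\lambda x_s\Big)\d_{W_s}.
\]
So, in the basis $\{\d_{W_s}\}$, the operator $\vp+\lambda\,\id$ on $\CCC\tau$ has matrix $M_\tau+\lambda I_n$.

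Next I identify the right-hand side of \eqref{eq:2equ} as the coordinate vector of a particular element of $\CCC\tau$. The $s$-th coordinate of $(\vp+\lambda\,\id)(\d_{W_j})$ is $c_{sj}+\delta_{sj}\lambda$. Since $\d_V=\sum_s \d_{W_s}$, the right-hand side of \eqref{eq:2equ} is the coordinate vector of $(\vp+\lambda\,\id)(\d_{W_j}) - c_{ij}\d_V$. Because $\Ga$ is $k$-regular, $(\vp+\lambda\,\id)(\d_V)=(k+\lambda)\d_V$. The hypothesis ($i\neq j$ and $\Ga[W_j]$ not a component) is exactly what guarantees $k+\lambda\neq 0$. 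Hence $r_{ij}$ is defined and $(k+\lambda)r_{ij}=c_{ij}$, so the right-hand side equals $(\vp+\lambda\,\id)(\d_{W_j}-r_{ij}\d_V)$. Consequently $x_l=\delta_{lj}-r_{ij}$ solves \eqref{eq:2equ}, which proves (a).

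For (b), let $(d_1,\dots,d_n)$ be any solution and put
\[
g=\sum_{l=1}^n (d_l-\delta_{lj})\d_{W_l}+r_{ij}\d_V\in\CCC\tau.
\]
Subtracting the two solutions gives $(\vp+\lambda\,\id)(g)=0$. By \eqref{eq:pvp}, $p$ commutes with $\vp+\lambda\,\id$, so $(\vp+\lambda\,\id)(p(g))=0$ as well. The projection is computed directly from the definition of $p$: $p(\d_V)=\d_V$, and $p(\d_{W_l})=\sum_t \frac{|V_t\cap W_l|}{|V_t|}\d_{V_t}$. Therefore $p(g)=-\sum_t h_t\d_{V_t}$ with $h_t$ as in \eqref{eq:kt}. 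Finally, Lemma \ref{lem:ps} applied to $\pi$ shows that $\vp+\lambda\,\id$ on $\CCC\pi$ has matrix $M_\pi+\lambda I_m$ in the basis $\{\d_{V_t}\}$. So $[h_1,\dots,h_m]^\top$ lies in the kernel of $M_\pi+\lambda I_m$, which is \eqref{eq:2equhom}.

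Part (c) is then immediate. A nonzero $h$ is an eigenvector of $M_\pi$ for the eigenvalue $-\lambda=c_{jj}-c_{ij}$, and eigenvalues of quotient matrices are eigenvalues of $\Ga$ by \cite[Theorem 9.3.3]{GR}.

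The only real obstacle is spotting the right-hand side identity: one must recognise it as $(\vp+\lambda\,\id)(\d_{W_j}-r_{ij}\d_V)$, which is what explains the role of $r_{ij}$. Once that is seen, the rest is bookkeeping of coordinates.
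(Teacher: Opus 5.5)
Your proof is correct and follows essentially the same route as the paper. Part (a) uses the same particular solution $x_l=\delta_{lj}-r_{ij}$, and part (b) rests on the same two ingredients, Lemma~\ref{lem:ps} for both partitions and the commutation $p\vp=\vp p$; the only difference is that you show $g\in\Ker(\vp+(c_{ij}-c_{jj})\,\id)$ in $\CCC V$ before projecting, whereas the paper applies $p$ first and compares the two images, which is a cosmetic (and slightly cleaner) reordering.
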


\begin{proof}
Fix $i \ne j$ with $1 \le i, j \le n$ such that $\Ga[W_j]$ is not a connected component of $\Ga$. Then $k+c_{ij}-c_{jj} > 0$.

\medskip

(a) Substituting $(\de_{1j}-r_{ij}, \ldots, \de_{nj}-r_{ij})$ for $(x_1, \ldots, x_n)$ in \eqref{eq:2equ}, the $l$-th coordinate of the left-hand side of \eqref{eq:2equ} becomes 
\begin{eqnarray*}
-\left(\sum_{s=1}^{n} r_{ij}c_{ls}\right) + c_{lj} + (\de_{lj}-r_{ij})(c_{ij} - c_{jj}) 
&=& -r_{ij}k + c_{lj} + (\de_{lj}-r_{ij})(c_{ij} - c_{jj})\\
&=& -r_{ij}(k + c_{ij} - c_{jj}) + c_{lj} + \de_{lj}(c_{ij} - c_{jj})\\
&=&  (c_{lj} - c_{ij}) + \de_{lj}(c_{ij} - c_{jj}),
\end{eqnarray*}
which agrees with the right-hand side of \eqref{eq:2equ}. Hence $(\de_{1j}-r_{ij}, \ldots, \de_{nj}-r_{ij})$ is a solution to \eqref{eq:2equ}. Therefore, \eqref{eq:2equ} is consistent. 

\medskip

(b) Now suppose that $(d_1, \ldots, d_n)$ is an arbitrary solution to \eqref{eq:2equ}. Let $(h_1, \ldots, h_m)$ be defined by \eqref{eq:kt}. By \eqref{eq:pvp} and \eqref{eq:ns0}, 
\begin{eqnarray*}
\vp p(\d_{W_j}) & = & p \vp(\d_{W_j}) \\
& = & \sum_{l=1}^n c_{lj}p(\d_{W_l}) \\
& = & c_{ij}p(\d_{W_i}) + \sum_{l \ne i} c_{lj}p(\d_{W_l}) \\
& = & c_{ij}\left(p(\d_{V}) - \sum_{l \ne i}p(\d_{W_l})\right) + \sum_{l \ne i} c_{lj}p(\d_{W_l}) \\
& = & c_{ij} p(\d_{V}) + \sum_{l \ne i} (c_{lj} - c_{ij})p(\d_{W_l}). 
\end{eqnarray*}
That is, 
\be
\label{eq:Id}
\left(\vp + (c_{ij} - c_{jj})\Id\right)(p(\d_{W_j})) = c_{ij} p(\d_{V}) + \sum_{l \ne i, j} (c_{lj} - c_{ij})p(\d_{W_l}),
\ee
where $\Id$ is the identity transformation.
On the other hand, by \eqref{eq:ns0}, we have
\begin{eqnarray*}
\vp\left(r_{ij}\d_{V} + \sum_{l=1}^n d_{l} \d_{W_l}\right) & = & r_{ij}\vp(\d_{V}) + \sum_{t=1}^n d_{t} \vp(\d_{W_t}) \\
& = & r_{ij} k\d_{V} + \sum_{t=1}^n d_{t} \left(\sum_{l=1}^n c_{lt}\d_{W_l}\right) \\
& = & r_{ij} k\d_{V} + \sum_{l=1}^n  \left(\sum_{t=1}^n c_{lt}d_{t}\right)\d_{W_l}.
\end{eqnarray*}
Hence, by \eqref{eq:pvp} and \eqref{eq:ns0}, the fact that $r_{ij}=c_{ij}/(k+c_{ij}-c_{jj})$, and then the assumption that $(d_1, \ldots, d_n)$ is a solution to \eqref{eq:2equ}, we have
\begin{eqnarray*}
\left(\vp + (c_{ij} - c_{jj})\Id\right)p\left(r_{ij}\d_{V} + \sum_{l=1}^n d_{l} \d_{W_l}\right)& = & (c_{ij} - c_{jj}) \left(r_{ij} p(\d_{V}) + \sum_{l=1}^n d_{l} p(\d_{W_l})\right) + \\
& & r_{ij} k p(\d_{V}) + \sum_{l=1}^n  \left(\sum_{t=1}^n c_{lt}d_{t}\right)p(\d_{W_l}) \\
& = & c_{ij}p(\d_{V}) + \sum_{l=1}^n  \left((c_{ij} - c_{jj})d_{l} + \sum_{t=1}^n c_{lt}d_{t}\right)p(\d_{W_l}) \\ 
& = & c_{ij}p(\d_{V}) + \sum_{l=1}^n  \left((c_{lj} - c_{ij}) + \de_{lj}(c_{ij} - c_{jj})\right)p(\d_{W_l}) \\
& = & c_{ij} p(\d_{V}) + \sum_{l \ne i, j} (c_{lj} - c_{ij})p(\d_{W_l}).
\end{eqnarray*}

Combining this with \eqref{eq:Id}, we obtain 
\be
\label{eq:pdw}
p(\d_{W_j}) - r_{ij}p(\d_{V}) - \sum_{l=1}^n d_{l} p(\d_{W_l}) \in \Ker\left(\vp_{\pi} + (c_{ij} - c_{jj})\Id_{\pi}\right).
\ee

By the definition of $p$, we have $p(\d_{V}) = \d_{V} = \sum_{t=1}^m \d_{V_t}$ and 
$$
p(\d_{W_l}) = \sum_{t=1}^{m} \left(\frac{\sum_{v \in V_t}\d_{W_l}(v)}{|V_t|}\right) \d_{V_t} = \sum_{t=1}^{m} \frac{|V_t \cap W_l|}{|V_t|} \d_{V_t}
$$ 
for $1 \le l \le n$. So we have 
\begin{eqnarray*}
p(\d_{W_j}) - r_{ij}p(\d_{V}) - \sum_{l=1}^n d_{l} p(\d_{W_l}) & = & \sum_{t=1}^{m} \frac{|V_t \cap W_j|}{|V_t|} \d_{V_t} - r_{ij}\sum_{t=1}^m \d_{V_t} - \sum_{l=1}^n d_{l} \left(\sum_{t=1}^{m} \frac{|V_t \cap W_l|}{|V_t|} \d_{V_t}\right) \\
& = & \sum_{t=1}^{m} \left(\frac{|V_t \cap W_j|}{|V_t|}-r_{ij}-\sum_{l=1}^n d_{l}\frac{|V_t \cap W_l|}{|V_t|} \right)\d_{V_t} \\
& = & \sum_{t=1}^{m} h_t\d_{V_t},
\end{eqnarray*}	
where $h_t$ is as defined in \eqref{eq:kt}. Since the matrices of $\vp_{\pi}$, $\Id_{\pi}$ with respect to the basis $\{\d_{V_1}, \ldots, \d_{V_m}\}$ of $\CCC \pi$ are $M_{\pi}$, $I_m$, respectively, it follows from \eqref{eq:pdw} that the vector $[h_1, \ldots, h_m]^\top$ is a solution to \eqref{eq:2equhom}.

\medskip

(c) This statement now follows.
\qed
\end{proof}

\begin{remark}
\label{rem:2equ} 
(a) If $\Ga$ is connected and $n \ge 2$, then for any $1 \le j \le n$, $\Ga[W_j]$ is not a connected component of $\Ga$, and Theorem \ref{thm:2equ} holds for any $1 \le i, j \le n$ with $i \ne j$.  

(b) The matrix $\vp_\pi$ in \eqref{eq:pdw} is exactly the matrix $M_\pi$. Therefore \eqref{eq:pdw} is equivalent to $(h_1, \ldots, h_m)$ being a solution to \eqref{eq:2equhom}. So the second statement in Theorem \ref{thm:2equ} can be expressed as \eqref{eq:pdw}. 

(c) For $\ell=1, \ldots,n$, put $d_\ell^*=\delta_{lj}-r_{ij}$. The proof of  
part (a) shows that $(d_1^*, \ldots, d_n^*)$ is a solution to
(\ref{eq:2equ}). Equation~(\ref{eq:kt}) shows that the corresponding
$(h_1,\ldots,h_m)$ is $(h_1^*,\ldots,h_m^*)=(0,\ldots,0)$. The general
solution to (\ref{eq:2equ}) has the form
$(a_1,\ldots,a_n)+(d_1^*,\ldots,d_n^*)$, where either
$(a_1,\ldots,a_n)=(0,\ldots,0)$, or $[a_1,\ldots,a_n]^\top$ is an eigenvector of
$M_\tau$ with eigenvalue $c_{jj}-c_{ij}$. In the second case,
(\ref{eq:kt}) shows that the corresponding $(h_1, \ldots, h_m)$ is given
by $\displaystyle{h_t = - \sum_{l = 1}^n \frac{|V_t \cap W_l|}{|V_t|}a_l}$ for $1 \le t \le m$.
In the special case when $\pi = \tau$, we can label the parts so that
$V_t = W_t$ for $1 \le t \le m=n$, and then $h_t = - a_t$ for each $t$; and so
the second and third statements in Theorem \ref{thm:2equ} are trivially true.
\end{remark}

\begin{remark}
(a) Theorem \ref{thm:2equ} can be applied to various pairs of equitable partitions $(\pi, \tau)$ of a regular graph $\Ga$. In particular, it gives necessary conditions for the existence of a completely regular code $C$ in $\Ga$ if we choose $\pi$ or $\tau$ to be $\pi_d(C)$ (so $M_{\pi}$ or $M_{\tau}$ is tridiagonal, rendering a simpler computation in \eqref{eq:2equ}--\eqref{eq:2equhom}). In Corollary \ref{coro:2equ-ps} below, we consider the case when $C$ has covering radius $1$ (that is, $C$ is a perfect set).  

(b) If $\Ga$ is distance-regular, then in Theorem \ref{thm:2equ} we can choose $\pi$ or $\tau$ to be $\pi_d(u)$ for some $u \in V$ (so $M_{\pi}$ or $M_{\tau}$ is tridiagonal). For example, in Corollary \ref{coro:2equ-ps} below, one of $\pi$ and $\tau$ is set to be $\pi_d(C)$ for a perfect set $C$, and the other can be chosen to be $\pi_d(u)$ when $\Ga$ is distance-regular.  

(c) In Corollary \ref{coro:2equ-tr} below, we can choose $\tau$ to be $\pi_d(C)$ for a completely regular code $C$ in $\Ga$, thereby obtaining necessary conditions for the existence of such a code. If $\Ga$ is distance-regular, we can also choose $\tau$ to be $\pi_d(u)$ for some $u \in V$. 

(d) If $G$ is a subgroup of the automorphism group $\Aut(\Ga)$ of $\Ga$, then the $G$-orbits on $V$ form an equitable partition of $\Ga$. In Theorem \ref{thm:2equ}, we may choose $\pi$ or $\tau$ to be such a partition for some $G \le \Aut(\Ga)$. 
\end{remark}

\begin{corollary}
\label{coro:2equ-ps}
Let $\Ga = (V, E)$ be a connected $k$-regular graph, where $k \ge 1$. Let $\pi = \{V_1, \ldots, V_m\}$ be an equitable partition of $\Ga$ with quotient matrix $M_{\pi} = (b_{ij})_{1 \le i,j \le m}$ and $W_1$ an $(a, b)$-perfect set in $\Ga$, where $0 \le a \le k-1$ and $1 \le b \le k$. 
\begin{enumerate}[\rm (a)]
\item 
The vector $[h_1, \ldots, h_m]^\top$ defined by
\be
\label{eq:eigen}
h_t = \frac{|V_t \cap W_1|}{|V_t|} - \frac{b}{k-a+b}\; \text{ for } 1 \le t \le m
\ee  
is a solution to the homogeneous system of linear equations
\be
\label{eq:eigen1}
(M_{\pi} + (b-a)I_m) \bmat{y_1 \\ \vdots \\ y_m} = \bmat{0 \\ \vdots \\ 0}.
\ee
In particular, either 
\[\frac{|V_t \cap W_1|}{|V_t|} = \frac{b}{k-a+b}\]
for each $1 \le t \le m$, or $a-b$ is an eigenvalue of $M_{\pi}$ (and hence an eigenvalue of $\Ga$) with $[h_1, \ldots, h_m]^\top$ as a corresponding eigenvector.
\item 
For any $1 \le i, j \le m$ with $i \ne j$, the following hold:
\begin{enumerate}[\rm (i)]
\item 
the system of linear equations
\be
\label{eq:eigen2}
(M_{\pi} + (b_{ij} - b_{jj}) I_m) \bmat{x_1\\ \vdots \\ x_m} = \bmat{(b_{1j} - b_{ij}) + \de_{1j}(b_{ij} - b_{jj}) \\ \vdots \\ (b_{mj} - b_{ij}) + \de_{mj}(b_{ij} - b_{jj})}
\ee
is consistent; 
\item
for any solution $(d_1, \ldots, d_m)$ to \eqref{eq:eigen2}, the vector $[h_1, h_2]^\top$ defined by
$$
h_1 = - \frac{b_{ij}}{k+b_{ij}-b_{jj}} - \sum_{l = 1}^m \frac{|W_1 \cap V_l|}{|W_1|}(d_l - \de_{lj}),\, h_2 = - \frac{b_{ij}}{k+b_{ij}-b_{jj}} - \sum_{l = 1}^m \frac{|W_2 \cap V_l|}{|W_2|}(d_l - \de_{lj}),
$$
where $W_2=V\setminus W_1$, is a solution to the system of equations
$$
\bmat{a + b_{ij} - b_{jj} & k-a \\
b & k - b + b_{ij} - b_{jj}} \bmat{y_1\\ y_2} = \bmat{0 \\ 0};
$$
\item
in particular, if $b_{jj} - b_{ij}$ is neither $k$ nor  $a-b$, then 
$$
\sum_{l = 1}^m \frac{|W_1 \cap V_l|}{|W_1|}(d_l - \de_{lj}) = \sum_{l = 1}^m \frac{|W_2 \cap V_l|}{|W_2|}(d_l - \de_{lj}) = - \frac{b_{ij}}{k+b_{ij}-b_{jj}}.
$$
\end{enumerate}
\end{enumerate}
\end{corollary}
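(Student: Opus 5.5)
The plan is to derive both parts of the corollary from Theorem \ref{thm:2equ}, with the roles of the two partitions swapped between (a) and (b). Throughout, write $W_2 = V \setminus W_1$. Then $\{W_1, W_2\}$ is an equitable partition with quotient matrix \eqref{eq:qtabd}, so $c_{11}=a$, $c_{12}=k-a$, $c_{21}=b$, $c_{22}=k-b$. Since $\Ga$ is connected, Remark \ref{rem:2equ}(a) lets us apply Theorem \ref{thm:2equ} to any $i \ne j$.

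For (a), apply Theorem \ref{thm:2equ} with the given $\pi$ and with $\tau = \{W_1, W_2\}$, taking $i=2$ and $j=1$.
\begin{itemize}
\item Then $c_{ij}-c_{jj} = b-a$, so \eqref{eq:2equhom} is exactly \eqref{eq:eigen1}.
\item Also $r_{21} = b/(k+b-a)$, which is well defined because $k-a+b \ge 1$.
\item By Remark \ref{rem:2equ}(c), $d_l^* = \de_{l1} - r_{21}$ solves \eqref{eq:2equ}, but substituting it into \eqref{eq:kt} gives $h=0$. That is useless, so a different solution is needed.
\item The better choice is $d=(0,0)$. Check that it solves \eqref{eq:2equ}: the right-hand side has first coordinate $(c_{11}-c_{21}) + (c_{21}-c_{11}) = 0$ and second coordinate $c_{21}-c_{21} = 0$. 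So the system is homogeneous and the zero vector is a solution.
\item With $d=0$, \eqref{eq:kt} becomes $h_t = -r_{21} + |V_t\cap W_1|/|V_t|$, which is exactly \eqref{eq:eigen}.
\end{itemize}
The dichotomy in (a) is then Theorem \ref{thm:2equ}(c), since $c_{jj}-c_{ij} = a-b$. The only real insight here is to use the zero solution rather than the canonical one.

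For (b), apply Theorem \ref{thm:2equ} with the partitions interchanged: the theorem's "$\pi$" is $\{W_1,W_2\}$ and its "$\tau$" is the given $\pi$, whose quotient entries $b_{ij}$ play the role of $c_{ij}$, with $n=m$.
\begin{itemize}
\item Part (b)(i) is then Theorem \ref{thm:2equ}(a) verbatim.
\item Part (b)(ii) is Theorem \ref{thm:2equ}(b). The formula \eqref{eq:kt} becomes $h_t = -b_{ij}/(k+b_{ij}-b_{jj}) - \sum_l \frac{|W_t\cap V_l|}{|W_t|}(d_l-\de_{lj})$ for $t=1,2$.
\item The homogeneous system \eqref{eq:2equhom} uses the matrix \eqref{eq:qtabd} shifted by $(b_{ij}-b_{jj})I_2$, which is the displayed $2\times 2$ system.
\end{itemize}

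For (b)(iii), observe that the matrix \eqref{eq:qtabd} has eigenvalues $k$ and $a-b$. If $b_{jj}-b_{ij}$ is neither of these, then $M + (b_{ij}-b_{jj})I_2$ is nonsingular. Hence $h_1=h_2=0$, which rearranges to the stated equalities.

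I expect no serious obstacle: everything is bookkeeping in matching indices when the roles of the two partitions are swapped. The one delicate point is recognising in (a) that the zero vector solves \eqref{eq:2equ}, so that \eqref{eq:kt} reduces to \eqref{eq:eigen} directly.
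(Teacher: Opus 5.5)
Your proposal is correct and follows the paper's own proof essentially step for step. For (a), the paper also applies Theorem \ref{thm:2equ} with $\tau=\{W_1,W_2\}$, $(i,j)=(2,1)$ and the zero solution $(d_1,d_2)=(0,0)$ of the homogeneous system \eqref{eq:2equ}. For (b), it likewise swaps the roles of the two partitions and uses that the $2\times 2$ quotient matrix has eigenvalues $k$ and $a-b$.
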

 
\begin{proof}
Since $W_1$ is an $(a, b)$-perfect set, $\tau = \{W_1,W_2\}$ is an equitable partition of $\Ga$ with 
$$
M_{\tau} = \bmat{c_{11} & c_{12} \\ c_{21} & c_{22}} = \bmat{a & k-a\\b & k-b}. 
$$

(a) Consider $(i, j) = (2, 1)$. In this case, we have $c_{ij} - c_{jj} = b-a$,
\[r_{ij} = \frac{c_{ij}}{k+c_{ij}-c_{jj}} = \frac{b}{k-a+b},\]
and \eqref{eq:2equ} becomes
\be
\label{eq:2equ-ps}
b x_1 + (k-a) x_2 = 0. 
\ee
One solution is $(d_1,d_2)=(0,0)$. Then 
\[h_{t} = \frac{|V_t \cap W_1|}{|V_t|} - r_{ij}
= \frac{|V_t \cap W_1|}{|V_t|} - \frac{b}{k-1+b}\]
for $1 \le t \le m$, as given in \eqref{eq:eigen}, and the result follows from Theorem \ref{thm:2equ} immediately.     

(b) The result follows from Theorem \ref{thm:2equ} by treating $\tau, \pi$ above as $\pi, \tau$ in Theorem \ref{thm:2equ} and noting that the eigenvalues of $M_{\tau}$ are $k$ and $a-b$. 
\qed
\end{proof}

Since 
\[\frac{|V_t \cap W_2|}{|V_t|} - \frac{k-a}{k-a+b} = -\left(\frac{|V_t \cap W_1|}{|V_t|} - \frac{b}{k-a+b}\right),\]
if we apply Theorem \ref{thm:2equ} to the case $n=2$ and $(i, j) = (1, 2)$, we will obtain the same result as in part (a) of Corollary \ref{coro:2equ-ps}.  

In the special case when $(a, b) = (0, 1)$ or $(1, 1)$, part (a) of Corollary \ref{coro:2equ-ps} gives \cite[Proposition 3]{E87} and \cite[Theorem 5.3]{Zhou2016}, respectively. This part also recovers the known result (see, for example, \cite{BCGG2019, G1993}) that any regular graph admitting an $(a, b)$-perfect set must have $a-b$ as an eigenvalue. In particular, when $(a, b) = (0, 1)$ or $(1, 1)$ it recovers the known results that any regular graph admitting a perfect code must have $-1$ as an eigenvalue and any regular graph admitting a total perfect code must have $0$ as an eigenvalue (see \cite[Corollary 5.4]{Zhou2016}). 

Obviously, $\pi = \{\{v\}: v \in V\}$ is an equitable partition of $\Ga$ whose quotient matrix $M_{\pi}$ is exactly the adjacency matrix $A$ of $\Ga$. Applying Theorem \ref{thm:2equ} to this trivial equitable partition, we obtain the following corollary. 

\begin{corollary}
\label{coro:2equ-tr}
Let $\Ga = (V, E)$ be a connected $k$-regular graph, where $k \ge 1$. Set $V = \{v_1, \ldots, v_m\}$, where $m = |V|$. Let $\tau = \{W_1, \ldots, W_n\}$ be an equitable partition of $\Ga$ with quotient matrix $M_{\tau} = (c_{ij})_{1 \le i, j \le n}$. Then, for any $1 \le i, j \le n$ with $i \ne j$, and any solution $(d_1, \ldots, d_n)$ to the system of linear equations \eqref{eq:2equ}, the vector $[h_1, \ldots, h_m]^\top$ defined by
\be
\label{eq:kt1}
h_t = -r_{ij} - \sum_{1 \le l \le n\atop v_t \in W_l} (d_l - \de_{lj})\;
\text{ for $1\le t\le m$}
\ee
is a solution to the system of equations
$$
(A + (c_{ij} - c_{jj}) I_m) \bmat{y_1\\ \vdots \\ y_m} = \bmat{0 \\ \vdots \\ 0}.
$$
In particular, either $(h_1, \ldots, h_m) = (0, \ldots, 0)$ or $c_{jj} - c_{ij}$ is an eigenvalue of $\Ga$ with $[h_1, \ldots, h_m]^\top$ as a corresponding eigenvector. 
\end{corollary}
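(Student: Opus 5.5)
This is a direct specialisation of Theorem~\ref{thm:2equ}, taking $\pi$ to be the discrete partition $\{\{v_1\},\ldots,\{v_m\}\}$ of $V$ with $V_t=\{v_t\}$.

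First I will check that this $\pi$ is an equitable partition whose quotient matrix is the adjacency matrix. For $v_s\in V_s=\{v_s\}$, the number of neighbours of $v_s$ in $V_t=\{v_t\}$ is $a_{v_sv_t}$. Hence $M_\pi=A$, indexed compatibly with $v_1,\ldots,v_m$.

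Second, I will confirm the hypothesis on $(i,j)$. Since $\Ga$ is connected and $\tau$ is a partition with $i\ne j$, we have $n\ge 2$. By Remark~\ref{rem:2equ}(a), $\Ga[W_j]$ is not a connected component of $\Ga$. Consequently $k+c_{ij}-c_{jj}>0$, so $r_{ij}$ is defined and every pair $i\ne j$ is admissible in Theorem~\ref{thm:2equ}.

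Third, I will specialise formula \eqref{eq:kt}. Here $|V_t|=1$, and $|V_t\cap W_l|$ equals $1$ if $v_t\in W_l$ and $0$ otherwise. So
\[
\sum_{l=1}^n \frac{|V_t\cap W_l|}{|V_t|}(d_l-\de_{lj})=\sum_{1\le l\le n,\ v_t\in W_l}(d_l-\de_{lj}),
\]
which gives exactly \eqref{eq:kt1}. The sum actually has a single term, because $\tau$ is a partition. Part (b) of Theorem~\ref{thm:2equ} then says that $[h_1,\ldots,h_m]^\top$ solves $(A+(c_{ij}-c_{jj})I_m)y=0$. Part (a) guarantees that the system \eqref{eq:2equ} is consistent, so the statement is not vacuous.

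Finally, the ``in particular'' clause follows exactly as in Theorem~\ref{thm:2equ}(c). If $h\ne 0$, then $Ah=(c_{jj}-c_{ij})h$, so $c_{jj}-c_{ij}$ is an eigenvalue of $A$, and hence of $\Ga$, with eigenvector $h$. There is no real obstacle here; the only care needed is matching the indexing of $A$ with the ordering $v_1,\ldots,v_m$ and justifying the connectivity hypothesis.
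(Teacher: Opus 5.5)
Your proof is correct and is the paper's argument: the paper obtains this corollary by applying Theorem~\ref{thm:2equ} to the trivial equitable partition $\pi=\{\{v\}: v\in V\}$, whose quotient matrix is $A$. Your explicit reduction of \eqref{eq:kt} to \eqref{eq:kt1} via $|V_t|=1$, and your appeal to Remark~\ref{rem:2equ}(a) to show that every pair $i\ne j$ is admissible, spell out steps the paper leaves implicit.
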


In the case when $n = 2$, Corollary \ref{coro:2equ-tr} gives the following result. 

\begin{corollary}
\label{coro:2equ-tr-ps}
Let $\Ga = (V, E)$ be a connected $k$-regular graph, where $k \ge 1$. If $\Ga$ admits an $(a, b)$-perfect set $W_1$, where $0 \le a \le k-1$ and $1 \le b \le k$, then for any $\a \in \RRR \setminus \left\{1 - (k-a+b)^{-1}\right\}$, the vector $[h_v(\a)]^\top_{v \in V}$ defined by
$$
h_v(\a) = \left\{
\begin{array}{ll}
(k - a) \left(\displaystyle{\frac{1}{k-a+b}}-(1-\a)\right), & \text{if } v \in W_1 \\
- b \left(\displaystyle{\frac{1}{k-a+b}}-(1-\a)\right), & \text{if } v \not \in W_1
\end{array}
\right.
$$
is an eigenvector for the eigenvalue $a-b$ of $\Ga$.
\end{corollary}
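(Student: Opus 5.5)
The plan is to specialise Corollary \ref{coro:2equ-tr} to the two-part partition $\tau=\{W_1,W_2\}$, where $W_2=V\setminus W_1$. Since $W_1$ is an $(a,b)$-perfect set, $\tau$ is equitable with
$$
M_\tau=\bmat{a & k-a\\ b & k-b}.
$$
Because $\Ga$ is connected and $n=2$, Remark \ref{rem:2equ}(a) allows any $i\ne j$. I would take $(i,j)=(2,1)$, exactly as in the proof of Corollary \ref{coro:2equ-ps}(a). Then $c_{ij}-c_{jj}=b-a$, so the eigenvalue $c_{jj}-c_{ij}$ produced by Corollary \ref{coro:2equ-tr} is $a-b$. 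Writing $K=k-a+b>0$, we also have $r_{21}=b/K$.

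Next I will write out system \eqref{eq:2equ} explicitly. The first row reads $(a+b-a)x_1+(k-a)x_2=(c_{11}-c_{21})+(c_{21}-c_{11})=0$. The second row reads $bx_1+(k-b+b-a)x_2=0$. So both rows reduce to the single equation $bx_1+(k-a)x_2=0$, and its full solution set is $(d_1,d_2)=s\,(k-a,-b)$ with $s\in\RRR$.

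Now substitute into \eqref{eq:kt1}. For $v\in W_1$, only $l=1$ contributes, giving
$$
h_v=-\frac{b}{K}-(s(k-a)-1)=\frac{k-a}{K}-s(k-a)=(k-a)\Bigl(\frac1K-s\Bigr).
$$
For $v\in W_2$, only $l=2$ contributes, giving
$$
h_v=-\frac bK+sb=-b\Bigl(\frac1K-s\Bigr).
$$
Setting $s=1-\a$ gives exactly the vector $[h_v(\a)]_{v\in V}^\top$ in the statement. Corollary \ref{coro:2equ-tr} then says this vector lies in the kernel of $A+(b-a)I$.

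It remains to check that the vector is nonzero, which is the only point needing care. Since $k-a\ge1$, $b\ge1$ and $W_1,W_2$ are nonempty, the vector vanishes iff $1/K-(1-\a)=0$, that is, iff $\a=1-(k-a+b)^{-1}$. For every other $\a$ the vector is nonzero, hence an eigenvector of $A$ for the eigenvalue $a-b$, which completes the argument.
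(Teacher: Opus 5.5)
Your proposal is correct and follows the paper's proof essentially step for step. You specialise Corollary \ref{coro:2equ-tr} to $\tau=\{W_1,W_2\}$ with $(i,j)=(2,1)$, reduce \eqref{eq:2equ} to $bx_1+(k-a)x_2=0$ with general solution $(d_1,d_2)=(1-\a)(k-a,-b)$, substitute into \eqref{eq:kt1}, and observe that the resulting vector is nonzero exactly when $\a\ne 1-(k-a+b)^{-1}$; your computations at each of these steps check out.
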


\begin{proof}
Denote $V = \{v_1, \ldots, v_m\}$, where $m = |V|$. Let $\pi = \{\{v_t\}: t = 1, \ldots, m\}$ and $\tau = \{W_1, W_2\}$, with $W_2=V\setminus W_1$.

Consider $(i, j) = (2, 1)$. As seen in the proof of Corollary \ref{coro:2equ-ps}, in this case \eqref{eq:2equ} becomes \eqref{eq:2equ-ps}. The general solution is $(d_1, d_2) = ((1-\a)(k-a), -(1-\a)b)$, where $\a \in \RRR$. On the other hand, by \eqref{eq:kt1}, 
\[\begin{array}{rcll}
h_t(\a) &=& \displaystyle{\frac{k-a}{k-a+b}} - d_1 = (k-a)\left(\displaystyle{\frac{1}{k-a+b}}-(1-\a)\right)
& \hbox{if }v_t \in W_1,\\[3mm]
h_t(\a) &=& - \displaystyle{\frac{b}{k-a+b}} - d_2 = - b \left(\displaystyle{\frac{1}{k-a+b}}-(1-\a)\right)
& \hbox{if }v_t \in W_2.\\
\end{array}\]
Since $k-a \ge 1$ and $b \ge 1$, if 
\[\a \ne 1 - \frac{1}{k-a+b},\]
then $(h_1(\a), \ldots, h_m(\a)) \ne (0, \ldots, 0)$ and, by Corollary \ref{coro:2equ-tr}, $c_{11} - c_{21} = a-b$ is an eigenvalue of $\Ga$ and $[h_1(\a), \ldots, h_m(\a)]^\top$ is a corresponding eigenvector. 
\qed
\end{proof}

\section{Perfect sets in normal Cayley graphs}
\label{sec:eqCay}

Let $G$ be a group.
Recall that the Cayley graph $\Cay(G,S)$ is normal if the inverse-closed subset $S \subseteq G \setminus \{1\}$ is closed under conjugation; equivalently, the graph admits both left and right actions of $G$. It is connected if and only if
$S$ is a generating set for $G$. In this section we give necessary conditions for the existence of perfect sets in normal Cayley graphs using characters of groups and results from the previous section. Throughout this section, $\Cay(G,S)$
is a connected normal Cayley graph for $G$.

The graph $\Cay(G.S)$ is $k$-regular, where $k=|S|$. As in
Section~\ref{sec:eqReg}, $A$ is the adjacency matrix of this graph. 
The rows and columns of $A$ are indexed by the elements of $G$ such that the $(x,y)$-entry is $1$ or $0$ depending on whether $yx^{-1}$ is in $S$ or not. We assume throughout this section that $\tau = \{W_1, \ldots, W_n\}$ is an equitable partition of $\Cay(G, S)$ with quotient matrix $M_{\tau} = (c_{ij})_{1 \le i, j \le n}$. Set
$$
\tau x = \{W_1 x, \ldots, W_n x\}
$$
for any $x \in G$. For $X, Y \subseteq G$, set $X^{-1} = \{x^{-1}: x \in X\}$ and $XY = \{xy: x \in X, y \in Y\}$, as usual. 

\begin{lemma}
\label{lem:basic}
The following hold:
\begin{enumerate}[\rm (a)]
\item for every $x \in G$, $\tau x$ is an equitable partition of $\Cay(G, S)$, and moreover $M_{\tau x} = M_{\tau}$; 
\item for $1 \le i, j \le n$ and every $x \in W_i$, we have $|\{g \in S: x \in gW_j\}| = c_{ij}$;
\item for $1 \le i, j \le n$ and every $x \in W_i$, we have
$|\{g \in S: x \in W_{j}g\}| = c_{ij}$. 
\end{enumerate}
\end{lemma}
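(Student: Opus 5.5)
The plan is to work directly from the definition of $\Cay(G,S)$: $x$ and $y$ are adjacent if and only if $yx^{-1}\in S$, so the neighbourhood of $x$ is $\Ga(x)=Sx$. Normality, i.e.\ $S$ closed under conjugation, gives $Sx=xS$ for every $x\in G$. Both right multiplication $\rho_x\colon y\mapsto yx$ and left multiplication $\lambda_x\colon y\mapsto xy$ are then automorphisms. For right multiplication, $(y'x)(yx)^{-1}=y'y^{-1}$. For left multiplication, $(xy')(xy)^{-1}=x(y'y^{-1})x^{-1}$, which lies in $S$ if and only if $y'y^{-1}\in S$, because $S$ is conjugation-closed.

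For part (a), I would use the general fact that the image of an equitable partition under an automorphism $\sigma$ is equitable with the same quotient matrix. Concretely, for $u\in W_i$,
\[
|\Ga(u\sigma)\cap W_j\sigma|=|(\Ga(u)\cap W_j)\sigma|=c_{ij}.
\]
Applying this with $\sigma=\rho_x$ gives that $\tau x$ is equitable and $M_{\tau x}=M_\tau$, with parts labelled so that $W_ix$ corresponds to $W_i$. Right multiplication is an automorphism for every Cayley graph, so normality is not needed here.

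For part (b), take $x\in W_i$. We have $x\in gW_j$ if and only if $g^{-1}x\in W_j$. As $g$ runs over $S$, the element $g^{-1}$ also runs over $S$, since $S$ is inverse-closed. So the count is $|\{s\in S: sx\in W_j\}|$. The map $s\mapsto sx$ is a bijection from $S$ to $Sx=\Ga(x)$, so the count equals $|\Ga(x)\cap W_j|=c_{ij}$. This uses only inverse-closure.

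For part (c), we have $x\in W_jg$ if and only if $xg^{-1}\in W_j$. By inverse-closure again, the count is $|\{s\in S: xs\in W_j\}|=|xS\cap W_j|$. Normality gives $xS=Sx=\Ga(x)$, so the count is $c_{ij}$. I do not expect a real obstacle. The only point needing care is (c), the one place where normality enters: it is exactly what turns left translates $xS$ into neighbourhoods. One should also note that distinct $g$ give distinct elements $xg^{-1}$, so the counts are genuine cardinalities.
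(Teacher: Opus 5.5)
Your proof is correct and follows essentially the paper's route. Part (a) uses right multiplication as an automorphism, and part (b) is a restatement of $c_{ij}=|\Ga(x)\cap W_j|$ with $\Ga(x)=Sx$ (using inverse-closure). In (c) you pass directly through $|xS\cap W_j|$ and $xS=Sx$, while the paper matches the sets in (c) and (b) via $g\mapsto g^{y^{-1}}=xgx^{-1}$; this is the same conjugation-closure idea, and your version makes the full bijection evident rather than only the injectivity the paper writes out.
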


\begin{proof} (a) This follows from the fact that the right regular multiplication of $G$ is an automorphism of $\Cay(G, S)$. 

(b) This is a restatement of the definition of $c_{ij}$.

(c) Since $S$ is closed under conjugation, if $x = yg \in W_{j}g$ for some $y \in W_j$ and $g \in S$, then $g^{y^{-1}} \in S$ and $x = g^{y^{-1}}y \in g^{y^{-1}}W_j$. Moreover, if $x = y g = z h$ for some $y, z \in W_j$ and $g, h \in S$ with $g \ne h$, then $y \ne z$ and hence $g^{y^{-1}} \ne h^{z^{-1}}$ as $x = g^{y^{-1}}y = h^{z^{-1}}z$. From these and (b) we obtain (c) immediately. 
\qed
\end{proof}

By definition, an $(a, b)$-perfect set in $\Cay(G,S)$ is a subset $W_1$ of $G$ such that for any $x \in W_1$ there are exactly $a$ elements $g \in S$ with $gx \in W_1$ and for any $y \in W_2 = G \setminus W_1$ there are exactly $k-b$ elements $h \in S$ with $hy \in W_2$. In the case when $\tau = \{W_1, W_2\}$, Lemma \ref{lem:basic} gives rise to the following result. Note that parts (a) and (b) hold for arbitrary Cayley graphs. 

\begin{corollary}
\label{coro:basic}
If $W_1 \subseteq G$ is an $(a, b)$-perfect set in $\Cay(G, S)$, where $0 \le a \le k-1$ and $1 \le b \le k$, and $W_2=G\setminus W_1$, then the following hold:
\begin{enumerate}[\rm (a)]
\item for every $x \in G$, $W_1x$ is an $(a, b)$-perfect set in $\Ga$;
\item for every $x \in G$, $|\{g \in S: x \in gW_1\}|$ equals $a$ if $x \in W_1$ and equals $b$ if $x \in W_2$, and $|\{g \in S: x \in gW_2\}|$ equals $k-a$ if $x \in W_1$ and equals $k-b$ if $x \in W_2$;
\item for every $x\in G$, $|\{g \in S: x \in W_1g\}|$ equals $a$ if $x \in W_1$ and equals $b$ if $x \in W_2$, and $|\{g \in S: x \in W_2g\}|$ equals $k-a$ if $x \in W_1$ and equals $k-b$ if $x \in W_2$.
\end{enumerate}
\end{corollary}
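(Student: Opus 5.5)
The plan is to specialise Lemma \ref{lem:basic} to the two-part partition $\tau = \{W_1, W_2\}$. Since $W_1$ is an $(a,b)$-perfect set with $W_2 = G \setminus W_1$, $\tau$ is an equitable partition of $\Cay(G,S)$ with quotient matrix
\[
M_\tau = \bmat{c_{11} & c_{12}\\ c_{21} & c_{22}} = \bmat{a & k-a\\ b & k-b}.
\]
Each part of the corollary will then be read off from the corresponding part of Lemma \ref{lem:basic} by substituting these four entries.

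For (a), Lemma \ref{lem:basic}(a) says that $\tau x = \{W_1x, W_2x\}$ is equitable with $M_{\tau x} = M_\tau$. Since $W_1x$ and $W_2x = G\setminus W_1x$ form a two-part partition with this quotient matrix, $W_1x$ is $(a,b)$-perfect. The only ingredient is that right multiplication is an automorphism, so normality of $S$ is not needed here.

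For (b), I would apply Lemma \ref{lem:basic}(b) with $j=1$ and $j=2$, and with $i$ the index of the part containing $x$. This gives the counts $c_{11}=a$, $c_{21}=b$, $c_{12}=k-a$ and $c_{22}=k-b$. This part is just the definition of the quotient matrix, since $x \in gW_j$ if and only if $g^{-1}x \in W_j$, and $S=S^{-1}$. Again normality is not used, which matches the remark before the corollary.

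For (c), I would invoke Lemma \ref{lem:basic}(c) with the same substitutions. This is the only step that uses the hypothesis that $S$ is closed under conjugation. The conjugation bijection $g \mapsto g^{y^{-1}}$ that converts $x \in W_j g$ into $x \in g' W_j$ has already been handled inside Lemma \ref{lem:basic}, so the deduction is immediate. I expect no real obstacle anywhere in the argument. The only care needed is to keep the row/column indexing consistent: the entry $c_{ij}$ counts the neighbours in $W_j$ of a vertex in $W_i$, so a vertex in $W_2$ has $c_{21}=b$ neighbours in $W_1$.
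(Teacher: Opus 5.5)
Your proposal is correct and is the paper's own argument: specialise Lemma~\ref{lem:basic} to $\tau=\{W_1,W_2\}$ with $M_\tau=\bmat{a & k-a\\ b & k-b}$ and read off the entries. You also correctly note, as the paper does, that only part (c) needs $S$ to be closed under conjugation.
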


By part (a) of Lemma \ref{lem:basic}, when considering an equitable partition $\tau$ of the vertex set in a Cayley graph we may assume without loss of generality that $1 \in W_1$. In particular, by part (a) of Corollary \ref{coro:basic}, when considering a perfect set $W_1$ in a Cayley graph we may assume without loss of generality that $1 \in W_1$. 

Recall that the left regular permutation representation of $G$,
$$
\l_G: G \rightarrow \Sym(G)
$$
given by $\l_G(x)=gx$, extends to a linear representation on $\CCC G$ defined by
$$
(\l_G(g)(f))(x) = f(g^{-1}x)\; \hbox{ for } x \in G, f\in
\CCC G.
$$
It can be verified that
$$
\l_G(g)(\d_v) = \d_{gv}\;\hbox{ for } g, v \in G.
$$
Let $\vp: \CCC G \rightarrow \CCC G$ be the adjacency linear mapping as in (\ref{eq:vphi}) for $V=G$ and $\Ga = \Cay(G, S)$. 

\begin{lemma}
\label{lem:vphi}
With $\varphi$ as above,
$\varphi = \sum_{g \in S} \l_G(g)$.
\end{lemma}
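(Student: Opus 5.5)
Both sides are linear maps on $\CCC G$, so it suffices to check that they agree on the basis $\{\d_v : v \in G\}$. I will compute each side on $\d_v$ separately and then compare.

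For the left-hand side, the definition \eqref{eq:vphi} gives $\vp(\d_v) = \sum_{u \in \Ga(v)} \d_u$, where $\Ga(v)$ is the neighbourhood of $v$ in $\Cay(G,S)$. By the definition of the Cayley graph, $u$ is adjacent to $v$ if and only if $uv^{-1} \in S$, that is, $u = gv$ for some $g \in S$. The map $g \mapsto gv$ is a bijection from $S$ onto $\Ga(v)$, since right multiplication by $v$ is injective. Hence $\Ga(v) = \{gv : g \in S\}$ with no repetitions, and so $\vp(\d_v) = \sum_{g \in S} \d_{gv}$. The graph is undirected because $S$ is inverse-closed, so the same neighbourhood is obtained whether adjacency is read from $yx^{-1}$ or $xy^{-1}$.

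For the right-hand side, the identity $\l_G(g)(\d_v) = \d_{gv}$ noted just above the lemma gives $\sum_{g \in S} \l_G(g)(\d_v) = \sum_{g \in S} \d_{gv}$. This matches the expression for $\vp(\d_v)$, so the two maps agree on every basis vector and are therefore equal by linearity. Note that normality of the Cayley graph is not used; the identity holds for every Cayley graph.

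The only step that needs care is the orientation convention: the adjacency rule ($yx^{-1} \in S$) must produce left translates $gv$, which is what matches $\l_G$ acting by left multiplication. Since $u$ adjacent to $v$ means $uv^{-1} \in S$, i.e.\ $u \in Sv$, the conventions do align.
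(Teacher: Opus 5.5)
Your proposal is correct and follows the paper's proof exactly: both sides are evaluated on the basis $\{\d_v : v \in G\}$, using $\l_G(g)(\d_v)=\d_{gv}$ and $\Ga(v)=\{gv : g\in S\}$. Your remarks on the orientation convention and on normality not being used are accurate but go beyond what the paper's argument states.
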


\begin{proof}
For any $v \in G$, we have
\[\left(\sum_{g \in S} \l_G(g)\right)(\d_v) = \sum_{g \in S} \l_G(g)(\d_v) = \sum_{g \in S} \d_{gv} = \sum_{u \in \Ga(v)} \d_u = \varphi(\d_v).\]
Since $\{\d_v: v \in G\}$ is a basis of $\CCC G$, the result follows. 
\qed
\end{proof}

Denote by $\otimes$ the Kronecker product of matrices and $\rank(M)$ the rank of a matrix $M$. In the following discussion, a vector is interpreted as an 
$n$-tuple of elements of $\CCC G$, that is, an element of $(\CCC G)^n$.

\begin{lemma}
\label{lem:nec-cond}
Let $A$ be the adjacency matrix of $\Ga = \Cay(G, S)$. Then for any equitable partition $\tau = \{W_1, \ldots, W_n\}$ of $\Ga$ we have 
\be
\label{eq:rk-ineq}
\rank\left(I_{n} \otimes A - M_{\tau}^{T} \otimes I_{|G|}\right) \le n|G| - r_{\tau},
\ee
where $r_{\tau}$ is the rank of the set of vectors $[\d_{W_{1}x}, \ldots , \d_{W_{n}x}]^\top$ for $x \in G$, in the linear space $(\CCC G)^n$. 
\end{lemma}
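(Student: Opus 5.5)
The plan is to show that the kernel of the linear map $L := I_n \otimes A - M_{\tau}^{T} \otimes I_{|G|}$, acting on $(\CCC G)^n \cong \CCC^{n|G|}$, contains every vector $[\d_{W_1 x}, \ldots, \d_{W_n x}]^\top$ with $x \in G$. Granting this, $\dim \Ker L \ge r_\tau$, and rank--nullity gives $\rank L = n|G| - \dim\Ker L \le n|G| - r_\tau$, which is \eqref{eq:rk-ineq}.

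Concretely, identify $(\CCC G)^n$ with column vectors of length $n|G|$ made of $n$ stacked blocks of length $|G|$. The block $I_n \otimes A$ applies $A$ (that is, $\vp$) to each component. The $l$-th block of $(M_\tau^T \otimes I_{|G|}) F$ is $\sum_{t} (M_\tau^T)_{lt} F_t = \sum_t c_{tl} F_t$. So $L F = 0$ is equivalent to
$$
\vp(F_l) = \sum_{t=1}^n c_{tl} F_t \quad \hbox{for } 1 \le l \le n.
$$

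Now take $F_l = \d_{W_l x}$. By Lemma \ref{lem:basic}(a), $\tau x$ is an equitable partition with the same quotient matrix $M_\tau$. Lemma \ref{lem:ps} applied to $\tau x$ then gives $\vp(\d_{W_l x}) = \sum_t c_{tl}\d_{W_t x}$, which is exactly the kernel condition. Hence each such vector lies in $\Ker L$, so their span, of dimension $r_\tau$, is contained in $\Ker L$.

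The only step needing care is the Kronecker indexing: one must check that the transpose on $M_\tau$ matches the column convention $\vp(\d_{W_j}) = \sum_i c_{ij}\d_{W_i}$ of \eqref{eq:ns0}. Beyond that the argument is immediate. Normality of the Cayley graph is not even needed here, since only right translations $\tau x$ are used.
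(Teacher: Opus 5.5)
Your proof is correct and follows essentially the same route as the paper: use Lemma \ref{lem:basic}(a) and Lemma \ref{lem:ps} to place every vector $[\d_{W_1x},\ldots,\d_{W_nx}]^\top$ in the kernel of $I_n\otimes A - M_\tau^T\otimes I_{|G|}$, then apply rank--nullity. Your Kronecker-indexing check is right. You are also right that normality is not needed for this lemma, since right translations are automorphisms of any Cayley graph.
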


\begin{proof}
Write $M_{\tau} = (c_{ij})_{1 \le i, j \le n}$. By part (a) of Lemma \ref{lem:basic}, for any $x \in G$, $\tau x = \{W_1 x, \ldots, W_n x\} $ is an equitable partition of $\Cay(G, S)$ with quotient matrix $M_{\tau x} = M_{\tau}$. Hence, by Lemma \ref{lem:ps}, we have
$$
\vp(\d_{W_{j}x}) = \sum_{i=1}^n c_{ij}\d_{W_{i}x}\;\hbox{ for } 1 \le j \le n. 
$$
In other words, for any $x \in G$, $(\d_{W_{1}x}, \ldots , \d_{W_{n}x})$ is a solution to the linear equation system
\be
\label{eq:ns2}
\left(I_{n} \otimes A - M_{\tau}^{T} \otimes I_{|G|}\right) \bmat{\mathbf{x}_{1} \\ \vdots \\ \mathbf{x}_{n}} = \bmat{\mathbf{0} \\ \vdots \\ \mathbf{0}},
\ee
where $\mathbf{x}_{i} \in \CCC G$ for $1 \le i \le n$, and $\mathbf{0}$ is the zero vector of $\CCC G$. Note that here we have $n$ linear systems each with $|G|$ equations. 

By the fundamental theorem of linear algebra, the dimension of the solution space of \eqref{eq:ns2} is equal to $n|G| - \rank\left(I_{n} \otimes A - M_{\tau}^{T} \otimes I_{|G|}\right)$. Since, as seen above, the vectors $[\d_{W_{1}x}, \ldots , \d_{W_{n}x}]^\top$ for $x \in G$ form a subset of the solution space of \eqref{eq:ns2}, it follows from the definition of $r_{\tau}$ that $r_{\tau} \le n|G| - \rank\left(I_{n} \otimes A - M_{\tau}^{T} \otimes I_{|G|}\right)$, as claimed in \eqref{eq:rk-ineq}. 
\qed
\end{proof}

If we treat $I_{n} \otimes A - M_{\tau}^{T} \otimes I_{|G|}$ as a linear mapping $\Psi$ from $(\CCC G)^n$ to $(\CCC G)^n$, then the above proof of Lemma \ref{lem:nec-cond} essentially says that $[\d_{W_{1}x}, \ldots, \d_{W_{n}x}]^\top \in \Ker(\Psi)$ for every $x \in G$. Note that the $n$ linear systems \eqref{eq:ns2} are dependent as $|S| \d_{G} = A \d_{G} = \sum_{j=1}^{n}\sum_{i=1}^{n} c_{ij}\d_{W_{i}x}$. 

Now we bring in the assumption that $\Cay(G, S)$ is normal. Then its eigenvalues are given by
\[\l_{j} = \frac{1}{\chi_j(1)} \sum_{g \in S} \chi_j(g)\;\hbox{ for }1 \le j \le h
\]
(\cite{DS81, Z88}, see also \cite[Theorem 5]{LZ2022}), and moreover the multiplicity of $\l_{j}$ is equal to
\[\sum_{1 \le i \le h\atop\l_i = \l_j} \chi_i(1)^2,\]
where $\{\chi_1, \ldots, \chi_h\}$ is a complete set of irreducible characters of $G$. Recall that the degree of an irreducible character $\chi$ of $G$ is equal to $\chi(1)$. Since $\chi$ is a class function, as usual, for a conjugacy class $K$ of $G$ we write $\chi(K) =\chi(g)$ where $g \in K$. Denote the set of all conjugacy classes of $G$ by
\be
\label{eq:pi1}
\pi = \{K_1, \ldots, K_m\}.
\ee
We call this the \emph{conjugacy class partition} of $G$. 
Then $\CCC \pi$ is the vector space of class functions of $G$. Since $S$ is closed under conjugation, the inner automorphism group $\Inn(G)$ of $G$ is a subgroup of $\Aut(\Ga)$ with respect to its natural action on $G$. Since the orbits of $\Inn(G)$ on $G$ are precisely the conjugacy classes of $G$, it follows that $\pi$ is an equitable partition of $\Cay(G, S)$.  

The first statement in the next lemma is a well known result in group theory, while the second one was proved in \cite{E87} using \cite[(5.4)]{Feit}. 

\begin{lemma}
[{\cite[Lemma 5]{E87}}]
\label{lem:ev}
Let $\pi$ be as in \eqref{eq:pi1}. Then the irreducible characters of $G$ constitute a basis of $\CCC \pi$, and moreover they are eigenvectors of $\varphi_{\pi}$. More explicitly, for any irreducible character $\chi$ of $G$, 
$$
\varphi_{\pi}(\chi) = \left(\frac{1}{\chi(1)} \sum_{g \in S} \chi(g)\right) \chi.
$$
\end{lemma}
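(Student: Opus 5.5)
The first assertion is standard character theory. The irreducible characters $\chi_1,\ldots,\chi_h$ are class functions, so they lie in $\CCC\pi$. They are orthonormal with respect to the inner product $\langle f_1,f_2\rangle = |G|^{-1}\sum_{x} f_1(x)\overline{f_2(x)}$, hence linearly independent. Their number $h$ equals the number $m$ of conjugacy classes, which is $\dim \CCC\pi$. So they form a basis of $\CCC\pi$.

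For the eigenvector claim, I would compute $\vp(\chi)$ directly using Lemma \ref{lem:vphi}, that is, $\vp = \sum_{g\in S}\l_G(g)$. This gives
\[
\vp(\chi)(x) = \sum_{g\in S}\chi(g^{-1}x).
\]
Since $S$ is inverse-closed, this equals $\sum_{g\in S}\chi(gx)$. Because $S$ is also a union of conjugacy classes, I can write $S = K_{i_1}\cup\cdots\cup K_{i_s}$. The problem then reduces to evaluating $\sum_{g\in K}\chi(gx)$ for a single class $K$.

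The key step is the classical identity
\[
\sum_{g\in K}\chi(gx) = \frac{|K|\,\chi(K)}{\chi(1)}\,\chi(x).
\]
I would prove it via the central character. Let $\rho$ be a representation affording $\chi$. The class sum $\hat K = \sum_{g\in K} g$ is central in $\CCC[G]$, so by Schur's lemma $\rho(\hat K) = \omega\,I$ for some scalar $\omega$. Taking traces gives $\omega\chi(1) = |K|\chi(K)$. Then
\[
\sum_{g\in K}\chi(gx) = \mathrm{tr}\bigl(\rho(\hat K)\rho(x)\bigr) = \omega\chi(x).
\]
(Alternatively, one can average $\rho(g)$ over conjugates, $\frac{1}{|G|}\sum_y \rho(ygy^{-1}) = \frac{\chi(g)}{\chi(1)}I$, which is the form cited from Feit.) This is the only non-formal step and the main obstacle, but it is textbook material.

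Summing over the classes in $S$ gives $\vp(\chi) = \bigl(\chi(1)^{-1}\sum_{g\in S}\chi(g)\bigr)\chi$. Since $\chi\in\CCC\pi$ and $\vp_\pi$ is the restriction of $\vp$ to $\CCC\pi$, this is exactly the stated formula for $\vp_\pi(\chi)$.
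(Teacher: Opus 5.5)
Your proof is correct and takes essentially the route the paper relies on. The paper gives no argument of its own: it calls the basis statement standard and cites the eigenvalue formula to \cite[Lemma 5]{E87}, which uses Feit's identity (5.4). That identity is the same Schur-lemma/central-character fact $\sum_{g\in K}\chi(gx)=|K|\chi(K)\chi(x)/\chi(1)$ that you prove, and your use of inverse-closedness of $S$ to pass from $\chi(g^{-1}x)$ to $\chi(gx)$ is fine.
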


Now we are ready to prove the first main result in this section. 
 
\begin{theorem}
\label{thm:nec-cond}
Let $\Ga=\Cay(G, S)$ be a normal Cayley graph. Let $\pi$ be the conjugacy class partition of $G$, $A$ the adjacency matrix of $\Ga$, and $k = |S|$ the degree of $\Ga$. Let $a$ and $b$ be integers with $0 \le a \le k-1$ and $1 \le b \le k$, and let $\QQ$ be the set of inequivalent irreducible characters $\chi$ of $G$ such that $(1/\chi(1)) \sum_{g \in S} \chi(g) = a-b$. If $W_1 \subset G$ is an $(a, b)$-perfect set in $\Ga$, and $W_2=G\setminus W_1$, then the following statements hold:
\begin{itemize}
\item[\rm (a)] 
\be
\label{eq:rk}
\rank \bmat{A-a I & -bI \\ -(k-a)I & A-(k-b)I} \le 2|G| - r_{W_1},
\ee
where $I = I_{|G|}$ and $r_{W_1}$ is the rank of the set of vectors $[\d_{W_1x}, \d_{W_2x}]^\top$ (for $x \in G$) in $(\CCC G)^2$;  
\item[\rm (b)] 
letting $r$ be the rank of the set of vectors $\{\d_{W_1x}: x \in G\}$ in $\CCC G$, the multiplicity of the eigenvalue $a-b$ of $\Ga$ is at least $r-1$, which in turn is at least $|G|/|W_1|-1$;
\item[\rm (c)] 
\be
\label{eq:rk1}
\rank([|K_i| \chi(K_i)]_{1 \le i \le m, \chi \in \QQ}) \ge \rank([|K_i \cap W_1x|]_{1 \le i \le m, x \in G}) - 1.
\ee
In particular,
\be
\label{eq:rk2}
|\QQ| \ge \rank([|K_i \cap W_1x|]_{1 \le i \le m, x \in G}) - 1.
\ee
\end{itemize}
\end{theorem}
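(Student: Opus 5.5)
Part (a) is Lemma \ref{lem:nec-cond} applied to $\tau=\{W_1,W_2\}$. In that case $M_\tau=\bmat{a & k-a\\ b & k-b}$, so
\[
I_2\otimes A-M_\tau^\top\otimes I=\bmat{A-aI & -bI\\ -(k-a)I & A-(k-b)I}.
\]
The lemma then gives \eqref{eq:rk} directly.

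For (b), we use the fact that $W_1x$ is an $(a,b)$-perfect set for every $x$ (Corollary \ref{coro:basic}(a)). Corollary \ref{coro:2equ-ps}(a), applied with the trivial partition $\pi=\{\{v\}\}$ so that $M_\pi=A$ (equivalently, Corollary \ref{coro:2equ-tr-ps}), shows that
\[
f_x=\d_{W_1x}-\frac{b}{k-a+b}\,\d_G
\]
lies in $\Ker(A-(a-b)I)$.

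\begin{itemize}
\item The span of $\{f_x\}$ contains $\mathrm{span}\{\d_{W_1x}\}$ modulo the line $\CCC\d_G$, so its dimension is at least $r-1$. Hence the multiplicity of $a-b$ is at least $r-1$.
\item For the bound $r\ge |G|/|W_1|$, note that $\sum_{x\in G}\d_{W_1x}=|W_1|\,\d_G$. The vectors $\d_{W_1x}$ therefore span a space containing $\d_G$ and each $\d_g$ with coefficient sum constraints. More simply, the $|G|\times|G|$ matrix $B$ with columns $\d_{W_1x}$ is $|W_1|$ times a doubly stochastic matrix, and its rank is at least $|G|/|W_1|$. To see this, choose greedily $x_1,x_2,\dots$ such that each $W_1x_i$ contains an element of $G$ not covered by the earlier translates. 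Each translate covers at most $|W_1|$ new elements, so this gives at least $|G|/|W_1|$ vectors. These vectors are linearly independent because each has a private coordinate relative to its predecessors (triangular structure).
\end{itemize}

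For (c), I project with $p$ onto $\CCC\pi$, the class functions, using the conjugacy class partition. By \eqref{eq:pvp}, $p(f_x)$ lies in $\Ker(\vp_\pi-(a-b)\Id_\pi)$. By Lemma \ref{lem:ev}, this kernel is spanned by the characters in $\QQ$. We have
\[
p(\d_{W_1x})=\sum_i \frac{|K_i\cap W_1x|}{|K_i|}\,\d_{K_i}.
\]
So the span of the vectors $p(\d_{W_1x})$, modulo $\d_G$, has dimension at least $\rank([|K_i\cap W_1x|])-1$, after rescaling the rows by $1/|K_i|$, which does not change the rank. This span sits inside the span of $\QQ$, whose coordinate vectors are $(\chi(K_i))_i$. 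Rescaling the rows by $|K_i|$ shows that the dimension of this span is the rank of $[|K_i|\chi(K_i)]$; the scaling is kept only to match the form of \eqref{eq:rk1}. This gives \eqref{eq:rk1}, and \eqref{eq:rk2} follows from the trivial bound that this rank is at most $|\QQ|$.

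The main obstacle is bookkeeping the $-1$ correctly. Since $\sum_x\d_{W_1x}$ is a multiple of $\d_G$, the span of $\{f_x\}$ is exactly the span of $\{\d_{W_1x}\}$ with the $\d_G$ direction removed. This gives a dimension drop of at most one, which is where the $r-1$ and the $-1$ in \eqref{eq:rk1} come from.
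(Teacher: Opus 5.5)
Your proposal is correct and is essentially the paper's proof. Part (a) is Lemma~\ref{lem:nec-cond} verbatim. In (b), your $f_x$ are the paper's solutions $\d_{W_1x}$ of $\vp(\mathbf{x})+(b-a)\mathbf{x}=b\d_G$ shifted by the particular solution $\frac{b}{k-a+b}\d_G$, and your greedy private-coordinate construction is the paper's contradiction argument run forwards. You skip the character-theoretic identification of $\dim\Ker(\vp+(b-a)\Id)$ with $\sum_{\chi\in\QQ}\chi(1)^2$, as the paper itself does in Theorem~\ref{thm:vt}; this is harmless, since an eigenspace dimension already bounds the multiplicity. In (c), $p(f_x)$ is exactly the class function $\sum_i h_i\d_{K_i}$ that the paper obtains from Corollary~\ref{coro:2equ-ps}(a), and your span-dimension count replaces its ``rank-one term plus product'' decomposition. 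The only blemish is the aside that $B$ is $|W_1|$ times a doubly stochastic matrix: it does not by itself give the rank bound and can be dropped, since the greedy argument that follows already proves it.
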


\begin{proof}
Since $W_1$ is an $(a, b)$-perfect set in $\Ga$, $\tau := \{W_1,W_2\}$ is an equitable partition of $\Ga$ with quotient matrix $M_{\tau} = \bmat{a & k-a\\b & k-b}$. Since $r_{W_1}$ takes the role of $r_{\tau}$ and 
$$
I_{2} \otimes A - M_{\tau}^{T} \otimes I = \bmat{A-a I & -bI \\ -(k-a)I & A-(k-b)I},
$$
where $I = I_{|G|}$, we obtain \eqref{eq:rk} from Lemma \ref{lem:nec-cond} immediately. 

By part (a) of Corollary \ref{coro:basic}, for any $x \in G$, $\tau x = \{W_1x, W_2x\}$ is an equitable partition in $\Ga$ with quotient matrix $M_{\tau x} = M_{\tau}$. Similarly to the proof of Lemma \ref{lem:nec-cond}, by Lemma \ref{lem:ps} we have $\vp(\d_{W_1x}) = a\d_{W_1x} + b\d_{W_2x}$ and $\vp(\d_{W_2x}) = (k-a)\d_{W_1x} + (k-b)\d_{W_2x}$ for $x \in G$. Using $\d_{W_1x}+\d_{W_2x} = \d_{G}$ and $A\d_{G} = k \d_{G}$, one can verify that both equations reduce to
$$
\vp(\d_{W_1x}) + (b-a)\d_{W_1x} = b \d_{G}.
$$  
Since $\{\d_{W_1x}: x \in G\}$ has rank $r$, it follows that the linear equation system $\vp(\mathbf{x}) + (b-a)\mathbf{x} = b \d_{G}$, where $\mathbf{x} \in \CCC G$, has at least $r$ independent solutions. Hence the solution space of the homogeneous system $\vp(\mathbf{x}) + (b-a)\mathbf{x} = \mathbf{0}
$ has dimension at least $r - 1$. In other words, 
\be
\label{eq:ker}
\dim(\Ker(\vp + (b-a)\Id)) \ge r - 1.
\ee

We now compute $\dim(\Ker(\vp + (b-a)\Id))$ using the irreducible characters of $G$. It is well known that in the decomposition of the regular representation into a direct sum of irreducible representations, the number of times an irreducible representation occurs is equal to its degree. Thus, by Lemmas \ref{lem:vphi} and \ref{lem:ev}, we have
$$
\vp = \bigoplus_{\rho}\left(\frac{1}{\chi_\rho(1)} \sum_{g \in S} \chi_\rho(g)\right) \mathrm{Id}_\rho,
$$
where the direct sum runs over all irreducible representations $\rho$ in the decomposition of $\l_G$ (into a direct sum of irreducible representations), and $\mathrm{Id}_\rho$ is the identity mapping on the space affording $\rho$. Hence
\be
\label{eq:dec1}
\vp + (b-a)\Id = \bigoplus_\rho\left(\frac{1}{\chi_\rho(1)} \sum_{g \in S} \chi_\rho(g) + (b-a)\right) \mathrm{Id}_\rho.
\ee
Therefore, by the definition of $\QQ$, we obtain $\dim(\Ker(\vp + (b-a)\Id)) = \sum_{\chi \in \QQ} \chi(1)^2$. Thus, by (\ref{eq:ker}), $\sum_{\chi \in \QQ} \chi(1)^2 \ge r - 1$. Since the left-hand side of this inequality is the multiplicity of $a-b$, we obtain that the multiplicity of the eigenvalue $a-b$ of $\Ga$ is at least $r-1$. Now by the definition of $r$, we may take $r$ linearly independent vectors $\d_{W_1x_1}, \ldots, \d_{W_1x_r}$ from $\{\d_{W_1x}: x \in G\}$. Then every vector in this set is a linear combination of these $r$ vectors. If $r < |G|/|W_1|$, then $|\bigcup_{i=1}^{r}W_1x_i| \le r|W_1| < |G|$, so we can take an element $y \in G \setminus (\bigcup_{i=1}^{r}W_1x_i)$. Take an element $z \in W_1$ and set $x_{r+1} = z^{-1}y$, so that $y \in W_1x_{r+1}$. Since $\d_{W_1x_{r+1}}$ is $1$ at $y$ but all $\d_{W_1x_1}, \ldots, \d_{W_1x_r}$ are $0$ at $y$, $\d_{W_1}x_{r+1}$ cannot be a linear combination of $\d_{W_1x_1}, \ldots, \d_{W_1x_r}$, which is a contradiction. Therefore, $r \ge |G|/|W_1|$ and part (b) is proved. 

It remains to prove \eqref{eq:rk1}. Applying part (a) of Corollary \ref{coro:2equ-ps} to $\pi$ and $\tau x$, we obtain 
$$
f = \sum_{i=1}^m h_i \d_{K_i} \in \Ker(\vp_{\pi} + (b-a)\Id_{\pi}),
$$
where
$$
h_i = \frac{|K_i \cap W_1x|}{|K_i|} - \frac{b}{k-a+b}\;\hbox{ for } 1 \le i \le m.
$$
Note that, since the rational numbers $h_i$ depend on $x$, so does $f$. On the other hand, by \eqref{eq:dec1}, $\Ker(\vp_{\pi} + (b-a)\Id_{\pi})$ is spanned by the irreducible characters in $\QQ$. So for each pair $(\chi, x)$ there exists $a_{\chi, x} \in \CCC$ such that
\bean
f & = & \sum_{\chi \in \QQ} a_{\chi, x}\chi \\
& = & \sum_{\chi \in \QQ} a_{\chi, x} \left(\sum_{i=1}^m \chi(K_i) \d_{K_i}\right) \\
& = & \sum_{i=1}^m \left(\sum_{\chi \in \QQ} a_{\chi, x} \chi(K_i)\right) \d_{K_i}.  
\eean
Since $\{\d_{K_i}: 1 \le i \le m\}$ is a basis of $\CCC \pi$, we have $h_i = \sum_{\chi \in \QQ} a_{\chi, x} \chi(K_i)$ for $1 \le i \le m$; that is,
$$
|K_i \cap W_1x| = \frac{b}{k-a+b} |K_i| + \sum_{\chi \in \QQ} a_{\chi, x}  |K_i| \chi(K_i),\;\, 1 \le i \le m.
$$
Since this holds for all $x \in G$, we obtain
$$
[|K_i \cap W_1x|]_{1 \le i \le m, x \in G} = \frac{b}{k-a+b} \bmat{|K_1| & \cdots & |K_1|\\ \vdots &  & \vdots \\ |K_m| & \cdots & |K_m|} + [|K_i| \chi(K_i)]_{1 \le i \le m, \chi \in \QQ} \cdot [a_{\chi, x}]_{\chi \in \QQ, x}.
$$
Since the first term on the right-hand side has rank $1$, the second term on the right-hand side should have rank at least $\rank([|K_i \cap W_1x|]_{1 \le i \le m, x \in G}) - 1$. On the other hand, the rank of $[|K_i| \chi(K_i)]_{1 \le i \le m, \chi \in \QQ}$ is no less than that of the second term on the right-hand side above. Hence \eqref{eq:rk1} follows. We obtain \eqref{eq:rk2} from \eqref{eq:rk1} as $|\QQ| \ge \rank([|K_i| \chi(K_i)]_{1 \le i \le m, \chi \in \QQ})$.
\qed
\end{proof}

In essence, \eqref{eq:rk2} says that the number of inequivalent irreducible characters of $G$ that give rise to the eigenvalue $a-b$ of $\Ga$ is no less than $\rank([|K_i \cap Cx|]_{1 \le i \le m, x \in G}) - 1$.

In the special case when $G$ is abelian, it has exactly $|G|$ irreducible characters $\chi$ all of which are linear. Thus $\QQ$ consists of those $\chi$ such that $\sum_{g \in S} \chi(g) = a-b$. Since $G$ is abelian, the eigenvalues of $\Cay(G, S)$ are precisely $\sum_{g \in S} \chi(g)$, with $\chi$ running over all irreducible characters of $G$. Therefore, part (b) of Theorem \ref{thm:nec-cond} gives the following result for Cayley graphs of abelian groups. (Compare this result with Corollary \ref{coro:2equ-tr-ps}.)

\begin{corollary}
\label{cor:nec-cond}
Let $G$ be an abelian group and $S$ an inverse-closed subset of $G$ with $1 \not \in S$. If $\Cay(G, S)$ admits an $(a, b)$-perfect set $W_1$, where $0 \le a \le k-1$ and $1 \le b \le k$ with $k = |S|$ the degree of $\Cay(G, S)$, then the multiplicity of $a-b$ as an eigenvalue of $\Cay(G, S)$ is at least $r-1$, where $r$ is the rank of the set of vectors $\{\d_{W_1x}: x \in G\}$ in $\CCC G$.   
\end{corollary}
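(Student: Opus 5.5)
The corollary is stated as a direct consequence of part (b) of Theorem \ref{thm:nec-cond}. My plan is therefore to check that the hypotheses there are met and then read off the conclusion.

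First, I check normality. Since $G$ is abelian, every conjugacy class is a singleton. Any subset $S$ is thus closed under conjugation, and $\Cay(G,S)$ is a normal Cayley graph in the sense of this paper. Connectedness has been a standing assumption throughout this section. If it is not assumed, I would note that the argument for part (b) never used connectedness. That argument uses only Corollary \ref{coro:basic}(a), that right translates $W_1x$ are again $(a,b)$-perfect, and the fact that $A\d_G=k\d_G$, which holds for any $k$-regular graph. So either way the hypotheses of Theorem \ref{thm:nec-cond} apply to the given $(a,b)$-perfect set $W_1$.

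Second, I invoke Theorem \ref{thm:nec-cond}(b). It gives that $\dim\Ker(\vp+(b-a)\Id)\ge r-1$, where $r$ is the rank of $\{\d_{W_1x}:x\in G\}$. This kernel dimension is the multiplicity of $a-b$ as an eigenvalue of $A$.

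For the abelian specialization, which is the only real content, I also identify that multiplicity in character terms. All irreducible characters $\chi$ are linear, so $\chi(1)=1$, and there are exactly $|G|$ of them. By the decomposition \eqref{eq:dec1}, the multiplicity of $a-b$ is $\sum_{\chi\in\QQ}\chi(1)^2=|\QQ|$, the number of characters with $\sum_{g\in S}\chi(g)=a-b$. This number equals the multiplicity of $a-b$ as an eigenvalue of $\Cay(G,S)$, whose eigenvalues are exactly the $|G|$ values $\sum_{g\in S}\chi(g)$.

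Combining the two steps gives the stated bound. I expect no real obstacle. The only point needing care is the connectedness hypothesis, which I handle by the observation above that it is not used in the proof of part (b).
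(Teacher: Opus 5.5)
Your proposal is correct and follows the paper's route exactly: since $G$ is abelian, every subset is conjugation-closed, so $\Cay(G,S)$ is normal and part (b) of Theorem~\ref{thm:nec-cond} applies directly. Your identification of the multiplicity as $|\QQ|$ matches the remark the paper makes just before the corollary, and your observation that connectedness is never used in the proof of part (b) is accurate; that proof relies only on Corollary~\ref{coro:basic}(a), Lemma~\ref{lem:ps}, $A\d_G=k\d_G$ and the symmetry of $A$.
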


Now let us consider the special case when $(a, b) = (1, 1)$ (that is, $W_1$ is a total perfect code in $\Ga$ with $1 \in W_1$). Let $\pi$ be as in \eqref{eq:pi1}. Without loss of generality we may assume $S = \bigcup_{i=1}^s K_i$ and $K_{s+1} = \{1\}$, where $1 \le s \le m-1$. Take $g_j \in K_j$ for $1 \le j \le s+1$ (so that $g_{s+1} = 1$). Since the neighbourhood of $1$ in $\Cay(G, S)$ is $S$, one can see that there exists a unique $i^*$ with $1 \le i^* \le s$ such that $|K_{i^*} \cap W_1| = 1$ and $|K_{i} \cap W_1| = 0$ for $1 \le i \le s$ with $i \ne i^*$. It can be proved (see the proof of Theorem 5.5 in \cite{Zhou2016}) that the first $s+1$ rows of the matrix $[|K_i \cap W_1g_j|]_{1 \le i \le m, 1 \le j \le s+1}$ form the submatrix
$\bmat{I_{s} & e_{i^*} \\ 0 & 1}$ whose rank is equal to $s+1$, where $e_{i^*} = [0, \ldots, 0, 1, 0,  \ldots, 0]^\top$ with $1$ in the $i^*$-th coordinate. So $[|K_i \cap W_1g_j|]_{1 \le i \le m, 1 \le j \le s+1}$ has rank at least $s+1$. Since this matrix has $s+1$ columns, it follows that its rank is equal to $s+1$. Thus, $\rank([|K_i \cap W_1x|]_{1 \le i \le m, x \in G}) \ge \rank([|K_i \cap W_1g_j|]_{1 \le i \le m, 1 \le j \le s+1}) = s+1$. By \eqref{eq:rk1} and \eqref{eq:rk2}, we then obtain $\rank([|K_i| \chi(K_i)]_{1 \le i \le m, \chi \in \QQ}) \ge s$ and $|\QQ| \ge s$, the latter being part (b) of \cite[Theorem 5.5]{Zhou2016}. Since $(a, b) = (1, 1)$, by part (c) of Corollary \ref{coro:basic}, $\{W_1g: g \in S\}$ is a partition of $G$ (see also \cite[Lemma 3.1(c)]{Zhou2016}). It follows that the rank of $\{\d_{W_1g}: g \in S\}$ is equal to $|S|$ and consequently the rank of $\{\d_{W_1x}: x \in G\}$ is no less than $|S|$. Thus, by part (b) of Theorem \ref{thm:nec-cond}, $\sum_{\chi \in \QQ} \chi(1)^2 \ge |S| - 1$, which is exactly part (a) of \cite[Theorem 5.5]{Zhou2016}. In summary, in the special case when $(a, b) = (1, 1)$, parts (b) and (c) of Theorem \ref{thm:nec-cond} yield the following known result. 

\begin{corollary}
[{\cite[Theorem 5.5]{Zhou2016}}]
\label{coro:nec-a}
If a normal Cayley graph $\Cay(G, S)$ admits a total perfect code, then the multiplicity of its eigenvalue $0$ is at least $|S| - 1$, and the number of inequivalent irreducible characters of $G$ which give rise to this eigenvalue is no less than the number of conjugacy classes of $G$ contained in $S$.
\end{corollary}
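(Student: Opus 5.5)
The plan is to specialise Theorem~\ref{thm:nec-cond} to $(a,b)=(1,1)$, so that $a-b=0$ and $\QQ$ is exactly the set of irreducible characters $\chi$ with $\sum_{g\in S}\chi(g)=0$, i.e.\ those giving rise to the eigenvalue $0$. By Corollary~\ref{coro:basic}(a) we may translate the total perfect code $W_1$ so that $1\in W_1$. We then derive the two claims from parts (b) and (c) of Theorem~\ref{thm:nec-cond} respectively.

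For the multiplicity bound, we use part (c) of Corollary~\ref{coro:basic} with $a=b=1$. It says that every $x\in G$ lies in exactly one set $W_1g$ with $g\in S$, so $\{W_1g: g\in S\}$ is a partition of $G$ into $|S|$ nonempty blocks. Disjoint nonempty supports make the vectors $\d_{W_1g}$, $g\in S$, linearly independent. Hence $r\ge |S|$, and Theorem~\ref{thm:nec-cond}(b) gives multiplicity of $0$ at least $r-1\ge |S|-1$.

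For the character count, write $S=\bigcup_{i=1}^s K_i$, set $K_{s+1}=\{1\}$, and choose representatives $g_j\in K_j$ with $g_{s+1}=1$. By \eqref{eq:rk2} it suffices to show $\rank([|K_i\cap W_1x|])\ge s+1$, and we get this by exhibiting an invertible $(s+1)\times(s+1)$ submatrix using rows $1,\dots,s+1$ and columns $x=g_1,\dots,g_{s+1}$.

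For the column $x=1$, we use that $1\in W_1$ has exactly $a=1$ neighbour in $W_1$. That neighbour lies in $S$, so exactly one class $K_{i^*}$ with $i^*\le s$ meets $W_1$, in one element. The row $K_{s+1}=\{1\}$ gives entry $1$.

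For the column $x=g_j$ with $j\le s$, we need $|K_i\cap W_1g_j|=\de_{ij}$ for $i\le s$, and $|\{1\}\cap W_1g_j|=0$.
\begin{itemize}
\item The last entry: $1\in W_1g_j$ would give $g_j^{-1}\in W_1\cap S$, which has to be reconciled with the total-code structure. Normality gives $g_j^{-1}\in S$.
\item The diagonal entry: $g_j=1\cdot g_j\in W_1g_j$ since $1\in W_1$, so $g_j\in K_j\cap W_1g_j$.
\item Uniqueness: elements of $S\cap W_1g_j$ correspond to neighbours of the vertex $g_j^{-1}$ (conjugation-closed $S$ lets us pass between left and right translates, as in Lemma~\ref{lem:basic}(c)). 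The vertex $g_j^{-1}$ has exactly one neighbour in $W_1$.
\end{itemize}
This is where I expect the real work, following the proof of \cite[Theorem 5.5]{Zhou2016}. The hoped-for outcome is that the submatrix is $\bmat{I_s & e_{i^*}\\ 0 & 1}$ (up to column order), which is upper unitriangular and hence of rank $s+1$.

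The main obstacle is verifying these entries exactly, in particular pinning down where the single element of each $W_1g_j\cap S$ sits. If an exact identity fails, I would fall back to showing the $(s+1)\times(s+1)$ block is triangular with nonzero diagonal after reordering, which is enough for rank $s+1$. Then \eqref{eq:rk2} yields $|\QQ|\ge s$, the number of conjugacy classes in $S$.
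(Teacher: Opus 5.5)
Your multiplicity argument is correct and is the paper's: the partition $\{W_1g: g\in S\}$ from Corollary~\ref{coro:basic}(c) gives $r\ge|S|$, and Theorem~\ref{thm:nec-cond}(b) finishes. For the character count you also follow the paper, and your uniqueness step is right. Indeed $y\in S\cap W_1g_j$ iff $yg_j^{-1}\in Sg_j^{-1}\cap W_1$, and $Sg_j^{-1}$ is the neighbourhood of $g_j^{-1}$. So $S\cap W_1g_j=\{g_j\}$, and rows $1,\dots,s$ of your block are as claimed.

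The gap is the bottom row, which you flagged but did not settle. Write $S\cap W_1=\{w\}$ with $w\in K_{i^*}$. Then $1\in W_1g_j$ iff $g_j^{-1}\in S\cap W_1$, i.e.\ iff $g_j=w^{-1}$.
\begin{itemize}
\item If the class of $w^{-1}$ has more than one element, you can choose representatives to avoid this, and the block is unitriangular.
\item If $w$ is central with $w^2\ne 1$, the block is $\bmat{I_s & e_{i^*}\\ e_{j^*}^\top & 1}$ with $j^*\ne i^*$, which is still nonsingular.
\item If $w$ is a central involution, then $g_{i^*}=w$ is forced. The columns $x=w$ and $x=1$ of the block are then identical (both are $e_{i^*}$ followed by $1$), so the block is singular. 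No reordering or triangularity fallback can rescue it.
\end{itemize}

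Changing the columns does not help either, because the second assertion is false as stated. Take $G=\ZZZ_6$, $S=\{2,3,4\}$ (the triangular prism) and $W_1=\{0,3\}$, which is a total perfect code.
\begin{itemize}
\item $S$ contains three conjugacy classes.
\item The spectrum is $3,1,0,0,-2,-2$, so only two characters give eigenvalue $0$.
\item Consistently, the translates of $W_1$ are $\{0,3\},\{1,4\},\{2,5\}$. So the matrix in \eqref{eq:rk2} has rank $3$ and yields only $|\QQ|\ge 2$.
\end{itemize}

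The paper's own proof has the same hole. It asserts, citing \cite{Zhou2016}, that the block equals $\bmat{I_s & e_{i^*}\\ 0 & 1}$. But for $\Cay(\ZZZ_4,\{1,3\})$ with $W_1=\{0,1\}$, $K_1=\{1\}$, $K_2=\{3\}$, the bottom row is $(0,1,1)$; the block happens to stay nonsingular there. For the prism the block is singular.

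What the argument actually proves is $|\QQ|\ge s-1$ in general, since rows $1,\dots,s$ contain $I_s$. It gives $|\QQ|\ge s$ only when the neighbour $w$ of $1$ in $W_1$ is not a central involution.
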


Similarly, in the case when $(a, b) = (0, 1)$, parts (b) and (c) of Theorem \ref{thm:nec-cond} give the following known result due to G. Landauer (1979) according to \cite{E87}. 

\begin{corollary}
[{\cite[Theorem 6]{E87}}]
\label{coro:nec-b}
If a normal Cayley graph $\Cay(G, S)$ admits a perfect code, then the multiplicity of its eigenvalue $-1$ is at least $|S|$, and the number of inequivalent irreducible characters of $G$ that give rise to this eigenvalue is no less than the number of conjugacy classes of $G$ contained in $S$. 
\end{corollary}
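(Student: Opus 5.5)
We specialise Theorem \ref{thm:nec-cond} to $(a,b)=(0,1)$, so that $a-b=-1$ and $k-a+b=k+1$, and mirror the total perfect code argument given just before Corollary \ref{coro:nec-a}. Let $W_1$ be a perfect code. By part (a) of Corollary \ref{coro:basic} we may assume $1\in W_1$. Write $S=\bigcup_{i=1}^s K_i$ and $K_{s+1}=\{1\}$, and pick $g_j\in K_j$ for $1\le j\le s$, with $g_{s+1}=1$.

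\emph{Multiplicity bound.} By part (c) of Corollary \ref{coro:basic} with $(a,b)=(0,1)$, every $x\in W_2$ lies in exactly one right translate $W_1g$ with $g\in S$, and every $x\in W_1$ lies in none. Hence the sets $W_1g$ ($g\in S$) are pairwise disjoint and disjoint from $W_1=W_1\cdot 1$. Their union with $W_1$ is $G$, since $\{W_1, W_1g: g\in S\}$ covers $W_1\cup W_2$. Each $W_1g$ is nonempty, so the $|S|+1$ vectors $\d_{W_1}$ and $\d_{W_1g}$ ($g\in S$) have pairwise disjoint nonempty supports and are linearly independent. Therefore $r\ge |S|+1$, and part (b) of Theorem \ref{thm:nec-cond} gives that the multiplicity of $-1$ is at least $r-1\ge |S|$.

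\emph{Character count.} By \eqref{eq:rk2} it suffices to show that $\rank([|K_i\cap W_1x|]_{1\le i\le m,\,x\in G})\ge s+1$. We restrict to the columns $x=g_1,\dots,g_s,1$ and the rows $i=1,\dots,s+1$, and compute this $(s+1)\times(s+1)$ block.
\begin{itemize}
\item Row $s+1$: since $1\in W_1$ and $W_1$ is independent with $1$'s neighbours $S$ lying outside $W_1$, we have $1\in W_1x$ iff $x\in W_1^{-1}$. So the entry is $1$ for $x=1$ and $0$ for $x=g_j$, because $g_j^{-1}\in S$ and hence $g_j^{-1}\notin W_1$.
\item Column $x=1$: for $i\le s$, $|K_i\cap W_1|=0$ because $W_1\cap S=\emptyset$ (as $1\in W_1$ and $W_1$ is independent).
\end{itemize}
So the last column is $e_{s+1}$, and the block has the form $\bmat{B & 0\\ 0 & 1}$ with $B=[|K_i\cap W_1g_j|]_{1\le i,j\le s}$. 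It remains to show $B$ is nonsingular, for instance triangular with nonzero diagonal or a permutation-like matrix.

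\emph{Main obstacle: controlling $B$.} We have $g_j\in W_1g_j\cap K_j$, so the diagonal entries are at least $1$. For the off-diagonal entries, I would argue that $K_i\cap W_1g_j$ consists of elements $wg_j$ with $w\in W_1$, and use that $W_1g_j$ is itself a perfect code containing $g_j$. Its elements other than $g_j$ are at distance at least $3$ from $g_j$. Normality lets us conjugate, so $K_i\cap W_1g_j$ relates to $K_i\cap g_jW_1$ in the same way as in the proof of Lemma \ref{lem:basic}(c). Following \cite[Theorem 6]{E87} and the $(1,1)$ analogue in \cite{Zhou2016}, the expected outcome is that $B$ is the identity: each $K_i$ with $i\le s$ meets $W_1g_j$ in at most the single point $g_j$, since $S\cap W_1g_j\subseteq$ the neighbourhood of $1$, and $1$ has exactly one neighbour in the perfect code $W_1g_j$ whenever $1\notin W_1g_j$, which holds because $g_j^{-1}\notin W_1$. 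This gives $|S\cap W_1g_j|=1$, realised by $g_j\in K_j$. Hence $B=I_s$, the rank is $s+1$, and \eqref{eq:rk2} gives $|\QQ|\ge s$, which completes the statement.
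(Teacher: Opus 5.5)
Your proposal is correct and follows the paper's route: you specialise parts (b) and (c) of Theorem \ref{thm:nec-cond} to $(a,b)=(0,1)$, use Corollary \ref{coro:basic}(c) to see that $W_1$ together with the translates $W_1g$ ($g\in S$) partitions $G$ (so $r\ge |S|+1$), and identify the $(s+1)\times(s+1)$ block of $[|K_i\cap W_1x|]$ at columns $g_1,\dots,g_s,1$ as $I_{s+1}$, the $(0,1)$ analogue of the block $\bmat{I_s & e_{i^*}\\ 0 & 1}$ in the total perfect code case. Your closing observation settles $B=I_s$ completely: $1\notin W_1g_j$ and $W_1g_j$ is a perfect code, so $S\cap W_1g_j=\{g_j\}$. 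The preceding remarks about distance and conjugation are therefore not needed.
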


Let $\Ga = \Cay(G, S)$ be a normal Cayley graph. Let $H$ be a subgroup of $G$ and let 
\be
\label{eq:piH}
\pi_H = \{x_1 H, \ldots, x_m H\}
\ee
be the set of left cosets of $H$ in $G$, where $m = |G:H|$ and $x_1, \ldots, x_m$ are representatives of such cosets. As noted in \cite{E87,Zhou2016}, $\pi_H$ is an equitable partition of $\Ga$ because $H$ can be viewed as a subgroup of $\Aut(\Ga)$ and the left cosets of $H$ in $G$ are the $H$-orbits under the action of $H$ on $G$ by right multiplication. It is evident that $\{\d_{x_1 H}, \ldots, \d_{x_m H}\}$ is a basis of $\CCC \pi_H$. Let 
$$
\l_G^H: G \rightarrow \Sym(\CCC \pi_H)
$$ 
be the representation of $G$ defined by $\l_G^H(g)(\d_{x_i H}) = \d_{gx_i H}$ for $g \in G$ and $1 \le i \le m$. 

The second main result in this section is as follows. 

\begin{theorem}
\label{thm:nec-cd}
Let $\Ga = \Cay(G, S)$ be a normal Cayley graph. Let $a$ and $b$ be integers with $0 \le a \le k-1$ and $1 \le b \le k$, where $k = |S|$ is the degree of $\Ga$. Let $H$ be a subgroup of $G$ such that 
\be
\label{eq:noteq}
\frac{1}{\chi(1)} \sum_{g \in S} \chi(g) \ne a-b 
\ee
for every irreducible character $\chi$ of $G$ which appears in the decomposition of $\l_G^H$ into a direct sum of irreducible characters. Then any $(a, b)$-perfect set $W_1$ in $\Ga$ contains the same number of vertices from each left coset of $H$ in $G$; more specifically, for any $x \in G$, we have 
\be
\label{eq:nec-cd}
\frac{|x H \cap W_1|}{|H|} = \frac{b}{k-a+b}
\ee
and hence $k-a+b$ must be a divisor of $b|H|$. In particular, if there exists a subgroup $H$ of $G$ satisfying \eqref{eq:noteq} such that $k-a+b$ does not divide $b|H|$, then $\Ga$ admits no $(a, b)$-perfect sets. 
\end{theorem}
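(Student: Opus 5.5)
The plan is to apply part (a) of Corollary~\ref{coro:2equ-ps} with $\pi$ taken to be the coset partition $\pi_H$ from \eqref{eq:piH}; this is legitimate because $\pi_H$ is an equitable partition of the connected $k$-regular graph $\Ga$. That corollary says the vector $[h_1,\ldots,h_m]^\top$ with
\[
h_t = \frac{|x_tH\cap W_1|}{|H|}-\frac{b}{k-a+b}
\]
lies in the kernel of $M_{\pi_H}+(b-a)I_m$. Equivalently, the function $f=\sum_t h_t\d_{x_tH}\in\CCC\pi_H$ lies in $\Ker(\vp_{\pi_H}+(b-a)\Id_{\pi_H})$. It therefore suffices to show that $a-b$ is not an eigenvalue of $\vp_{\pi_H}$. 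Once that is done, $f=0$, so every $h_t=0$, which is exactly \eqref{eq:nec-cd}. The divisibility statement then follows: $|xH\cap W_1|=b|H|/(k-a+b)$ is an integer, so $k-a+b$ divides $b|H|$. The final ``in particular'' clause is the contrapositive.

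The main step is to identify the spectrum of $\vp_{\pi_H}$ using the hypothesis \eqref{eq:noteq}. By Lemma~\ref{lem:vphi}, $\vp=\sum_{g\in S}\l_G(g)$. Since $\l_G(g)\d_{x_iH}=\d_{gx_iH}$, the subspace $\CCC\pi_H$ is $\l_G$-invariant, and the restriction of $\l_G(g)$ to it is exactly $\l_G^H(g)$. Hence
\[
\vp_{\pi_H}=\sum_{g\in S}\l_G^H(g).
\]
Now decompose $\CCC\pi_H$ into $G$-irreducible subspaces $U_\rho$, each affording an irreducible $\rho$ with character $\chi_\rho$ that appears in $\l_G^H$. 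Because $S$ is a union of conjugacy classes, $\sum_{g\in S}\rho(g)$ commutes with every $\rho(x)$. By Schur's lemma it is therefore a scalar $c_\rho$, and taking traces gives $c_\rho=\frac{1}{\chi_\rho(1)}\sum_{g\in S}\chi_\rho(g)$. This is the same computation that underlies \eqref{eq:dec1}.

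So $\vp_{\pi_H}+(b-a)\Id_{\pi_H}$ acts on each $U_\rho$ as the scalar $c_\rho-(a-b)$. This scalar is nonzero by \eqref{eq:noteq}, so the map is invertible on $\CCC\pi_H$, and $f=0$ as required. I expect the only point needing care is this Schur-lemma step: $\l_G^H$ really is the restriction of $\l_G$ to $\CCC\pi_H$, and the central element $\sum_{g\in S}g$ acts by that scalar on every irreducible constituent. Everything else is a direct invocation of Corollary~\ref{coro:2equ-ps}(a).
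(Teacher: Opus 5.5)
Your proposal is correct and follows the paper's proof: you identify $\vp_{\pi_H}=\sum_{g\in S}\l_G^H(g)$, decompose $\CCC\pi_H$ into irreducible constituents on which this map acts as the scalar $\frac{1}{\chi_\rho(1)}\sum_{g\in S}\chi_\rho(g)$, deduce that $M_{\pi_H}+(b-a)I_m$ is nonsingular, and conclude via Corollary~\ref{coro:2equ-ps}(a). Your explicit Schur's-lemma argument, which uses the conjugation-invariance of $S$ and the restriction of $\l_G$ to the invariant subspace $\CCC\pi_H$, supplies the step the paper leaves as ``one can show''.
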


\begin{proof}
As before, let $\pi = \pi_H$ be as in \eqref{eq:piH}. Let $\vp: \CCC G \rightarrow \CCC G$ be the linear mapping defined in (\ref{eq:vphi}) for $V=G$ and $\Ga = \Cay(G, S)$. As noticed in \cite{E87}, similarly to Lemma \ref{lem:vphi}, a straightforward computation yields
$$
\vp_{\pi} = \sum_{g \in S} \l_{G}^{H}(g).
$$
In other words, if $\l_{G}^{H}(g)$ is viewed as the permutation matrix of the permutation $\d_{x_i H} \mapsto \d_{gx_i H}$, $1 \le i \le m$ of the basis $\{\d_{x_1 H}, \ldots, \d_{x_m H}\}$ of $\CCC \pi$, then the matrix of $\vp_{\pi}$ is given by $M_{\pi} = \sum_{g \in S} \l_{G}^{H}(g)$. Using this, one can show that
$$
\vp_{\pi} = \bigoplus_{\rho}\left(\frac{1}{\chi_\rho(1)} \sum_{g \in S} \chi_\rho(g)\right) \mathrm{Id}_\rho,
$$
where the direct sum runs over all irreducible representations $\rho$ in the decomposition of $\l_G^H$ into a direct sum of irreducible representations. Hence
$$
\vp_{\pi} + (b-a)\Id_{\pi} = \bigoplus_{\rho}\left(\frac{1}{\chi_\rho(1)} \sum_{g \in S} \chi_\rho(g) + (b-a)\right) \mathrm{Id}_\rho,
$$
where $\rho$ runs over the same range as above. Since by our assumption $H$ satisfies \eqref{eq:noteq}, it follows that $\det(M_{\pi} + (b-a)I_m) \ne 0$ and hence the system of homogeneous equations \eqref{eq:eigen1} has trivial solution only. Thus, by part (a) of Corollary \ref{coro:2equ-ps}, for any $(a, b)$-perfect set $W_1$ in $\Ga$, we have
\[\frac{|x_i H \cap W_1|}{|x_i H|} = \frac{b}{k-a+b}\]
for $1 \le i \le m$. In other words, \eqref{eq:nec-cd} holds for any $x \in G$. 
\qed
\end{proof}

In the special case when $(a, b) = (0, 1)$ or $(1, 1)$, Theorem \ref{thm:nec-cd} gives the following two known results, respectively. 

\begin{corollary}
[{\cite[Theorem 7]{E87}}]
\label{coro:nec-c}
Let $\Ga = \Cay(G, S)$ be a normal Cayley graph. Let $H$ be a subgroup of $G$ such that $\sum_{g \in S} \chi(g) \ne -\chi(1)$ for every irreducible character $\chi$ of $G$ which appears in the decomposition of $\l_G^H$ into a direct sum of irreducible characters. Then any perfect code in $\Ga$ intersects every left coset of $H$ in $G$ at exactly $|H|/(|S|+1)$ elements. In particular, if $|S|+1$ is not a divisor of $|H|$, then $\Ga$ has no perfect codes. 
\end{corollary}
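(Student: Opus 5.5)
This is the special case $(a,b)=(0,1)$ of Theorem~\ref{thm:nec-cd}, so the plan is simply to specialise that theorem. A perfect code in $\Ga$ is, by definition, a $(0,1)$-perfect set. Since $\Ga$ is connected and $k=|S|\ge 1$, the parameter ranges $0\le a\le k-1$ and $1\le b\le k$ are satisfied.

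First I will check that the hypothesis of Corollary~\ref{coro:nec-c} is exactly condition \eqref{eq:noteq} with $a-b=-1$. Let $\chi$ be an irreducible character. Since $\chi(1)>0$, the inequality $\frac{1}{\chi(1)}\sum_{g\in S}\chi(g)\ne -1$ is equivalent to $\sum_{g\in S}\chi(g)\ne -\chi(1)$. Hence $H$ satisfies \eqref{eq:noteq} for every irreducible constituent of $\l_G^H$.

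Next, Theorem~\ref{thm:nec-cd} gives, for every $x\in G$,
\[\frac{|xH\cap W_1|}{|H|}=\frac{b}{k-a+b}=\frac{1}{|S|+1}.\]
Hence every perfect code $W_1$ meets each left coset $xH$ in exactly $|H|/(|S|+1)$ elements.

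Finally, intersection sizes are nonnegative integers, so $|S|+1$ must divide $|H|$. Equivalently, this is the divisibility statement $k-a+b \mid b|H|$ of Theorem~\ref{thm:nec-cd} with $b=1$. If the divisibility fails, then no perfect code exists.

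There is no real obstacle here. The only point needing care is the translation between the two forms of the character condition, and that relies only on $\chi(1)\neq 0$.
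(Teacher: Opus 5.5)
Your proposal is correct and follows exactly the paper's route: the corollary is stated as the specialisation $(a,b)=(0,1)$ of Theorem~\ref{thm:nec-cd}, where $b/(k-a+b)=1/(|S|+1)$ and condition \eqref{eq:noteq} becomes $\sum_{g\in S}\chi(g)\ne-\chi(1)$ because $\chi(1)>0$. Your check of the parameter ranges (which needs $k\ge 1$) and your integrality argument for $|S|+1 \mid |H|$ are also fine.
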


\begin{corollary}
[{\cite[Theorem 5.10]{Zhou2016}}]
\label{coro:nec-d}
Let $\Ga = \Cay(G, S)$ be a normal Cayley graph. Let $H$ be a subgroup of $G$ such that $\sum_{g \in S} \chi(g) \ne 0$ for every irreducible character $\chi$ of $G$ which appears in the decomposition of $\l_G^H$ into a direct sum of irreducible characters. Then any total perfect code in $\Ga$ intersects every left coset of $H$ in $G$ at exactly $|H|/|S|$ elements. In particular, if $|S|$ is not a divisor of $|H|$, then $\Ga$ has no total perfect codes. 
\end{corollary}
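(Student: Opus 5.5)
The statement to prove is Corollary~\ref{coro:nec-d}. The plan is to specialise Theorem~\ref{thm:nec-cd} to $(a,b)=(1,1)$, exactly as Corollary~\ref{coro:nec-c} specialises it to $(0,1)$.

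First, recall that a total perfect code is precisely a $(1,1)$-perfect set. To apply Theorem~\ref{thm:nec-cd} we need $0 \le 1 \le k-1$ and $1 \le 1 \le k$, that is, $k=|S| \ge 2$. The degenerate case $k=1$ should be dealt with separately. There $\Ga=\Cay(G,S)$ is connected and $1$-regular, so $\Ga=K_2$ and $G=\ZZZ_2$. If $G$ had a total perfect code, every vertex would have exactly one neighbour in it. But $(1,1)$-perfectness with $W_2$ nonempty is impossible when $k=1$, since the quotient matrix would have row $(1, k-1)=(1,0)$ and force $W_1$ to be a component. So if the paper's convention requires $W_2 \ne \emptyset$, this case has no total perfect codes. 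If instead one allows $W_1=G$, the conclusion $|xH\cap W_1| = |H|/|S| = |H|$ holds trivially. I will note this briefly and assume $k\ge 2$ from then on.

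Next, translate the hypothesis. With $a-b=0$, condition \eqref{eq:noteq} reads $\frac{1}{\chi(1)}\sum_{g\in S}\chi(g)\ne 0$. Since $\chi(1)>0$, this is equivalent to $\sum_{g\in S}\chi(g)\ne 0$ for every irreducible constituent $\chi$ of $\l_G^H$, which is exactly the hypothesis of Corollary~\ref{coro:nec-d}.

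Theorem~\ref{thm:nec-cd} then gives
\[\frac{|xH\cap W_1|}{|H|}=\frac{b}{k-a+b}=\frac{1}{k},\]
so $|xH\cap W_1| = |H|/|S|$ for every left coset $xH$. Because the left-hand side is an integer, $|S|$ must divide $|H|$. Taking the contrapositive gives the final sentence of the corollary: if $|S|\nmid |H|$, then no total perfect code exists.

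No step is a real obstacle. The only point needing care is checking the range conditions on $a$ and $b$, which is the small $k=1$ edge case handled above.
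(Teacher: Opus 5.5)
Your proposal is correct and matches the paper, which obtains this corollary simply by specialising Theorem~\ref{thm:nec-cd} to $(a,b)=(1,1)$, where $b/(k-a+b)=1/|S|$ and condition \eqref{eq:noteq} reduces to $\sum_{g\in S}\chi(g)\ne 0$ because $\chi(1)>0$. Your separate treatment of the degenerate case $|S|=1$, where the range condition $a\le k-1$ fails, is a harmless extra that the paper leaves implicit.
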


If $H = \{1\}$ is the trivial subgroup of $G$, then $\pi_H$ defined in \eqref{eq:piH} is the trivial partition of $G$ into singletons and $\l_G^H$ can be identified with $\l_G$. For this special choice of $H$, Theorem \ref{thm:nec-cd} gives the known result that $\Cay(G, S)$ admits no $(a, b)$-perfect sets unless it has $a-b$ as an eigenvalue.

\section{Further discussions}
\label{sec:rem}

A graph is vertex-transitive if its automorphism group is transitive on its vertex set. The lower bounds in part (b) of Theorem \ref{thm:nec-cond} can be generalized to vertex-transitive graphs. We present this result in the following theorem whose proof in the special case of normal Cayley graphs yields another proof of part (b) of Theorem \ref{thm:nec-cond}. In this part, we denote by $u^x$ the image of the vertex $u$ under the automorphism $x$.

\begin{theorem}
\label{thm:vt}
Let $\Ga = (V, E)$ be a vertex-transitive graph with degree $k$, and let $a$ and $b$ be integers with $0 \le a \le k-1$ and $1 \le b \le k$. If $\Ga$ admits an $(a, b)$-perfect set $W_1$, then the multiplicity of $a-b$ as an eigenvalue of $\Ga$ is at least $r-1$, which in turn is at least $|V|/|W_1| - 1$, where $r$ is the rank of the set of vectors $\{\d_{W_1^x}: x \in \Aut(\Ga)\}$ in $\CCC V$, with $W_1^x$ the image of $W_1$ under $x$.
\end{theorem}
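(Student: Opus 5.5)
The plan is to mimic the proof of part (b) of Theorem \ref{thm:nec-cond}, replacing right translations by arbitrary automorphisms. First, for every $x \in \Aut(\Ga)$ the image $W_1^x$ is again an $(a,b)$-perfect set, because $x$ preserves adjacency. Hence $\{W_1^x, V\setminus W_1^x\}$ is an equitable partition with quotient matrix \eqref{eq:qtabd}. By Lemma \ref{lem:ps} (the case $m=2$ noted after it), $\vp(\d_{W_1^x}) = a\d_{W_1^x} + b\d_{V\setminus W_1^x}$. Substituting $\d_{V\setminus W_1^x} = \d_V - \d_{W_1^x}$ gives
\[
\vp(\d_{W_1^x}) + (b-a)\d_{W_1^x} = b\,\d_V \quad\text{for all } x \in \Aut(\Ga).
\]

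Next, choose $r$ linearly independent vectors $\d_{W_1^{x_1}},\dots,\d_{W_1^{x_r}}$ from this set. The $r-1$ differences $\d_{W_1^{x_1}}-\d_{W_1^{x_t}}$, for $2\le t\le r$, are linearly independent. By the displayed identity they lie in $\Ker(\vp+(b-a)\id)$, since the right-hand side $b\d_V$ cancels. As $A$ is symmetric, the dimension of this kernel is exactly the multiplicity of $a-b$ as an eigenvalue of $\Ga$. Therefore this multiplicity is at least $r-1$. Note that vertex-transitivity has not been used yet; $\Ga$ need not even be connected for this step.

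Finally, to show $r \ge |V|/|W_1|$, I would use vertex-transitivity exactly as the coordinate argument in the proof of Theorem \ref{thm:nec-cond}(b). Suppose $r<|V|/|W_1|$ and fix a basis $\d_{W_1^{x_1}},\dots,\d_{W_1^{x_r}}$ as above. Then $|\bigcup_{t} W_1^{x_t}| \le r|W_1| < |V|$, so there is a vertex $y$ outside this union. Pick $z\in W_1$ (note $W_1\neq\emptyset$). By transitivity there is $x\in\Aut(\Ga)$ with $z^x=y$, so $y\in W_1^x$. Then $\d_{W_1^x}$ takes the value $1$ at $y$, whereas every basis vector vanishes at $y$. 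Thus $\d_{W_1^x}$ is not in their span, contradicting the definition of $r$.

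The only step that needs care is the kernel argument, namely noticing that one must take differences to remove the inhomogeneous term. This is also why the bound is $r-1$ rather than $r$. Everything else is routine. For a normal Cayley graph, restricting to the right translations contained in $\Aut(\Ga)$ recovers part (b) of Theorem \ref{thm:nec-cond}.
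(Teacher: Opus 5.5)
Your proposal is correct and follows essentially the same route as the paper: each $W_1^x$ is again $(a,b)$-perfect, which gives $\vp(\d_{W_1^x})+(b-a)\d_{W_1^x}=b\,\d_V$ and hence $r-1$ independent eigenvectors for $a-b$; vertex-transitivity then enters only in the covering argument showing $r\ge |V|/|W_1|$. Your explicit differences $\d_{W_1^{x_1}}-\d_{W_1^{x_t}}$ simply spell out the step that the paper handles by referring back to the argument for \eqref{eq:ker}.
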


\begin{proof}
Since $\Ga$ is $k$-regular, $k$ is an eigenvalue of $\Ga$. Since $\Ga$ is vertex-transitive and $W_1$ is an $(a, b)$-perfect set in $\Ga$, for any $x \in \Aut(\Ga)$, $W_1^x := \{u^x: u \in W_1\}$ is an $(a, b)$-perfect set in $\Ga$. Thus, by Lemma \ref{lem:ps}, $\vp(\d_{W_1^x}) = (a-b)\d_{W_1^x} + b \d_{V}$, where $\vp$ is as defined in \eqref{eq:vphi}. Similarly to the argument leading to \eqref{eq:ker}, we see that the homogeneous linear system $\vp(\mathbf{x}) + (b-a)\mathbf{x} = \mathbf{0}$ has at least $r - 1$ linearly independent solutions. In other words, the multiplicity of $a-b$ as an eigenvalue of $\Ga$ is at least $r-1$. Take $r$ linearly independent vectors $[\d_{W_1^{x_1}}, \ldots, \d_{W_1^{x_r}}]^\top$, where $x_1, \ldots, x_r \in \Aut(\Ga)$. If $r < |V|/|W_1|$, then $|\bigcup_{i=1}^r W_1^{x_i}| \le r |W_1| < |V|$, so we can take $v \in V \setminus (\bigcup_{i=1}^r W_1^{x_i})$. Take $u \in W_1$. Since $\Ga$ is vertex-transitive, there exists an automorphism $x_{r+1} \in \Aut(\Ga)$ such that $u^{x_{r+1}} = v$. Since $u \in W_1$, we have $v \in W_1^{x_{r+1}}$ and hence $\d_{W_1^{x_{r+1}}}$ is $1$ at vertex $v$. On the other hand, by the choice of $v$, all $\d_{W_1^{x_1}}, \ldots, \d_{W_1^{x_r}}$ are $0$ at vertex $v$. Hence the vectors $\d_{W_1^{x_1}}, \ldots, \d_{W_1^{x_r}}, \d_{W_1^{x_{r+1}}}$ are linearly independent, but this contradicts the definition of $r$. Therefore, $r \ge |V|/|W_1|$. 
\qed
\end{proof}

We conclude this paper by proving the following proposition with the help of Theorem \ref{thm:nec-cond}. (We assume $n \ge 13$ to avoid lengthy discussions for small cases.) 

\begin{proposition}
\label{ex:dih}
Let $D_{2n} = \langle r, s: r^n = s^2 = 1, s^{-1}rs = r^{-1}\rangle$ be the dihedral group of order $2n$, and let $\Ga = \Cay(D_{2n}, S)$, where $S = \{r, r^{-1}, r^j s: 0 \le j \le n-1\}$. If $n \geq 13$ is odd, then for any integers $a, b$ with $0 \le a \le n+1$, $1 \le b \le n+2$ and $(a, b) \ne (2, n)$, $\Ga$ admits no $(a,b)$-perfect sets, and moreover the only $(2,n)$-perfect sets of $\Ga$ are $\{1, r^i, r^{-i}: 1 \le i \le (n-1)/2\}$ and $\{r^j s: 0 \le j \le n-1\}$. 
\end{proposition}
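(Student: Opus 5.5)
The plan is to work directly with the combinatorics of $\Ga$ and use the spectral results only as a filter. The local structure is explicit. Right multiplication and the relation $r^i s\, r^j s=r^{i-j}$ show the following. A rotation $r^j$ is adjacent to $r^{j\pm1}$ and to all $n$ reflections. A reflection $r^js$ is adjacent to $r^{j\pm1}s$ and to all $n$ rotations. Thus $\Ga$ consists of two $n$-cycles, the rotations $R_0=\langle r\rangle$ and the reflections $F_0=R_0s$, joined by a complete bipartite graph $K_{n,n}$, and $k=n+2$. Let $W_1$ be an $(a,b)$-perfect set, and put $R=W_1\cap R_0$, $F=W_1\cap F_0$, $\alpha=|R|$, $\gamma=|F|$.

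\textbf{Step 1 (reduction to the two cycles).} A rotation $r^j$ has $|R\cap\{r^{j\pm1}\}|+\gamma$ neighbours in $W_1$. A reflection $r^js$ has $|F\cap\{r^{j\pm1}s\}|+\alpha$ neighbours in $W_1$. Hence, whenever $R\ne\emptyset,R_0$, the set $R$ is a perfect set of the cycle $C_n$ with some parameters $(c_1,c_2)$, and $a=c_1+\gamma$, $b=c_2+\gamma$. Likewise, whenever $F\ne\emptyset,F_0$, the set $F$ is a perfect set of $C_n$ with parameters $(d_1,d_2)$, and $a=d_1+\alpha$, $b=d_2+\alpha$.

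\textbf{Step 2 (spectral filter).} By part (a) of Corollary~\ref{coro:2equ-ps} applied to $C_n$, $c_1-c_2$ must be an eigenvalue $2\cos(2\pi h/n)$ of $C_n$ with $h\ne0$. The same computation also follows from the character table of $D_{2n}$ via Lemma~\ref{lem:ev}. For $n\ge13$ the difference $c_1-c_2$ must moreover be an integer. For odd $n$ this leaves only the value $-1$, attained by $h=n/3$, and this is possible only when $3\mid n$. For a nontrivial part the induced subgraph on $R$ then has constant degree $c_1$ with period-$3$ structure, so $R$ is a perfect code of $C_n$ or its complement, and $|R|\in\{n/3,2n/3\}$.

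\textbf{Step 3 (trivial cases).} Suppose $R=\emptyset$ and $F$ is nontrivial. Every rotation lies outside $W_1$, so $b=\gamma$, while also $b=d_2\le2$. This contradicts $\gamma\ge n/3>2$. The cases $R=R_0$, and the symmetric cases with $F$ trivial, are dispatched in the same way using $|R_0|=n$ or $|F_0|=n$. The remaining cases have both parts trivial. The choices $W_1=R_0$ and $W_1=F_0$ each give $(a,b)=(2,n)$. The choices $W_1=\emptyset$ and $W_1=G$ are excluded. So for $3\nmid n$ the only perfect sets are $R_0=\{1,r^{\pm i}\}$ and $F_0=\{r^js\}$. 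This is exactly the claimed conclusion. The bounds $|W_1|\ge|G|/(\mathrm{mult}+1)$ from Theorem~\ref{thm:nec-cond}(b) and the coset condition of Theorem~\ref{thm:nec-cd} with $H=\langle r\rangle$ could also be cited as cross-checks. Note that $\l_G^H$ contains only the trivial and sign characters, with eigenvalues $n+2$ and $2-n$.

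\textbf{Main obstacle: the case $3\mid n$.} Here both $R$ and $F$ are nontrivial. Comparing the two expressions for $a$ gives $c_1-d_1=\alpha-\gamma\in\{0,\pm n/3\}$, which forces $\alpha=\gamma$ and $c_1=d_1$. This case does \emph{not} seem to close. Take $R=\{r^{3t}\}$ and $F=\{r^{3t}s\}$. A direct check shows that this is an $(n/3,\,n/3+1)$-perfect set. For $n=15$, for instance, $1$ has $5$ neighbours in $W_1$, while $r$ has $6$. I therefore expect the proposition to require the additional hypothesis $3\nmid n$, or else to list these extra perfect sets: the $(n/3,n/3+1)$-sets above and their complements, the $(2n/3+1,2n/3)$-sets. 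I would present the proof for $3\nmid n$ along Steps 1--3. I would then verify the counterexample carefully before deciding whether the statement itself must be amended.
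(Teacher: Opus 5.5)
Your analysis is right, and your counterexample is genuine: the proposition as stated is false whenever $3\mid n$. For $n=15$ take $W_1=\{r^{3t},\,r^{3t}s\}=\langle r^3,s\rangle$. A vertex of $W_1$ has no $W_1$-neighbour on its own $15$-cycle and $5$ on the other cycle. A vertex outside $W_1$ has exactly one on its own cycle and $5$ on the other. So $W_1$ is $(5,6)$-perfect, with $(a,b)$ in the stated range and $(a,b)\neq(2,n)$. This is consistent with the paper's own constraints: $(k-a)|W_1|=12\cdot 10=6\cdot 20=b|W_2|$, $a-b=-1$ is an eigenvalue because $3\mid n$, and $|W_1\cap\langle r\rangle|=|W_1\cap\langle r\rangle s|=nb/(n+3)=5$, as Theorem~\ref{thm:nec-cd} with $H=\langle r\rangle$ requires.

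The paper's proof breaks in its final part, where $3\mid n$ and $a-b=-1$.
\begin{itemize}
\item In Case~1, subcase $r,r^{-1}\in W_2$, it correctly shows that $W_1\cap(\{1\}\cup T)$ is independent on the rotation cycle. It then asserts that the rotations along $(r,\ldots,r^{n-1})$ alternate between $W_2$ and $W_1$. But the number of $W_2$-neighbours in $T$ of a vertex of $W_2\cap T$ is only known to be constant, equal to $0$ or $1$ (look at $r$). The paper tacitly takes it to be $0$. The value $1$ gives the period-$3$ pattern $W_1,W_2,W_2,W_1,\ldots$ starting at $1$, which is exactly your example.
\item In the subcase $r\in W_1$, $r^{-1}\in W_2$, the paper derives a period-$4$ pattern. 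Yet $\{r^j,r^js: j\not\equiv 2 \pmod 3\}$ is a perfect set lying in that subcase.
\item In Case~2 the equality $|\Ga(1)\cap W_1|=|\Ga(r^i)\cap W_1|$ is used with $1\in W_2$ and $r^i\in W_1$. There the two sides are $b$ and $a$, which differ.
\end{itemize}

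For $3\nmid n$ your Steps 1--3 give a complete proof by a genuinely different route. The paper reads off the spectrum of $\Ga$ from the character table of $D_{2n}$ and restricts to $a-b\in\{2-n,-1\}$. It then argues pair by pair: two of the five pairs with $a-b=2-n$ fall to Theorem~\ref{thm:nec-cond}(b), the rest to local counting, and $a-b=-1$ is attacked by the case analysis above.

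You instead use that $\Ga$ is two $n$-cycles joined by $K_{n,n}$. The join contributes a constant, so the trace of any perfect set on each cycle is either trivial or a perfect set of $C_n$. Since odd $C_n$ has no integer eigenvalue other than $2$ unless $3\mid n$, your argument has three advantages:
\begin{itemize}
\item it handles all $(a,b)$ at once;
\item it needs no characters of $D_{2n}$;
\item it does not use $n\ge 13$ in that case.
\end{itemize}

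The paper's route is meant to illustrate the machinery of Section~\ref{sec:eqCay}, but its ad hoc case analysis is where the period-$3$ configurations were lost. Yours finds them automatically and gives the corrected classification for $3\mid n$. Besides $\langle r\rangle$ and $\langle r\rangle s$, the perfect sets are exactly the nine $(n/3,n/3+1)$-perfect sets $\{r^{3t+u}\}\cup\{r^{3t+w}s\}$ with $u,w\in\{0,1,2\}$ chosen independently, together with their nine complements.

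Two small corrections.
\begin{itemize}
\item The complements are $(k-b,k-a)=(2n/3+1,2n/3+2)$-perfect, not $(2n/3+1,2n/3)$-perfect; their parameter difference must again be the eigenvalue $-1$.
\item The integrality of $c_1-c_2$ has nothing to do with $n\ge 13$; it is automatic.
\end{itemize}
With these fixed, you can state the amended proposition outright rather than conditionally: either add the hypothesis $3\nmid n$, or keep $3\mid n$ and use the extended list.
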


\begin{proof}
Suppose that $n \geq 13$ is odd. Then $D_{2n}$ has $(n+3)/2$ conjugacy classes (see, for example, \cite{Feit}), namely $\{1\}$, $\{r^i, r^{-i}\}$ ($1 \le i \le (n-1)/2$), and $\{r^j s: 0 \le j \le n-1\}$. Hence $\Ga$ is a connected normal Cayley graph with degree $|S| = n+2$. Since $n$ is odd, the irreducible characters of $D_{2n}$ are: the trivial character $\chi_1$; the linear character $\chi_2$ which takes values $\chi_2(1) = 1$, $\chi_2(r^i) = 1$ ($1 \le i \le (n-1)/2$) and $\chi_{2}(s) = -1$; and $(n-1)/2$ characters $\psi_{j}$ ($1 \le j \le (n-1)/2$) of degree $2$ which take values $\psi_{j}(1) = 2$, $\psi_{j}(r^i) = \omega^{ji} + \omega^{-ji}$ ($1 \le i \le (n-1)/2$) and $\psi_{j}(s) = 0$, where $\omega = e^{2\pi i/n}$. The eigenvalues of $\Ga$ are:
\begin{center}
\begin{tabular}{ll}
$n+2$ & with multiplicity 1,\\
$(1/\chi_2(1)) \sum_{g \in S} \chi_2(g) = 2-n$ & with multiplicity 1,\\
$(1/\psi_{j}(1)) \sum_{g \in S} \psi_{j}(g)$ & ($1 \le j \le (n-1)/2$) \\
$\quad{} = \omega^{j} + \omega^{-j} = 2 \cos (2\pi j/n)$ & each with multiplicity $4$.\\
\end{tabular}
\end{center}
Since $n$ is odd and $1 \le j \le (n-1)/2$, $2 \cos (2\pi j/n)$ is an integer if and only if $n = 3j$, in which case $3$ divides $n$ and $2\cos(2\pi j/n) = -1$. Note that $-(n+2) \le a-b \le n$, so $a-b \ne n+2$. Thus, if $a-b$ is an eigenvalue of $\Ga$, then it must be $2-n$ or $-1$, the latter happening only when $3$ divides $n$. Clearly, $a-b = 2-n$ if and only if $(a, b) \in \{(0, n-2), (1, n-1), (2, n), (3, n+1), (4, n+2)\}$. So, if $3$ is not a divisor of $n$ and $(a, b)$ is not one of these pairs, then $\Ga$ admits no $(a, b)$-perfect sets. If $3$ is a divisor of $n$, $(a, b)$ is not one of these pairs, and $a-b \neq -1$, then $\Ga$ admits no $(a, b)$-perfect sets.

Now assume that $(a, b) \in \{(0, n-2), (1, n-1), (2, n), (3, n+1), (4, n+2)\}$ and $\Ga$ admits an $(a,b)$-perfect set $W_1$. Then $|W_1| = b$ by \eqref{eq:abd} and $|W_2| = 2n-b = n+2-a$ as $a-b = 2-n$, implying that every vertex in $W_1$ is adjacent to every vertex in $W_2$. If $(a, b) \in \{(0, n-2), (1, n-1)\}$, then by part (b) of Theorem \ref{thm:nec-cond}, the multiplicity of $2-n$ as an eigenvalue of $\Ga$ is at least $\lceil 2n/b \rceil - 1 = 2$, which contradicts the fact that $2-n$ is a simple eigenvalue of $\Ga$. Hence $(a, b) \in \{(2, n), (3, n+1), (4, n+2)\}$. 

Consider the case $(a, b) = (2,n)$ first. In this case we have $|W_1| = |W_2| = n$ and the quotient matrix of the partition $\{W_1,W_2\}$ is $\bmat{2 & n \\ n & 2}$. Without loss of generality we may assume $1 \in W_1$. Then $W_2 \subseteq S$ as all possible edges between $W_1$ and $W_2$ exist in $\Ga$. Since $|W_2| = n$ and $|S|=n+2$, $|W_2 \cap \{r^j s: 0 \le j \le n-1\}| = n-2, n-1$ or $n$, that is, $|W_2 \cap \{r, r^{-1}\}| = 2, 1$ or $0$. Note that $r$ is adjacent to $r^l s r = r^{l-1} s $ for $0 \le l \le n-1$. So, if $|W_2 \cap \{r, r^{-1}\}| = 2$ or $1$, say, $r \in W_2$, then $r$ has at least $n-3 \ge 10$ neighbours in $W_2$, which is a contradiction. So we must have $W_2 \cap \{r, r^{-1}\} = \emptyset$, that is, $W_2 = \{r^j s: 0 \le j \le n-1\}$ and $W_1 = \{1, r^i, r^{-i}: 1 \le i \le (n-1)/2\}$. It can be verified that these two sets are both $(2,n)$-perfect sets in $\Ga$.

Now consider the case $(a, b) = (3,n+1)$. In this case we have $|W_1| = n+1$, $|W_2| = n-1$, and the quotient matrix of the partition $\{W_1, W_2\}$ is $\bmat{3 & n-1 \\ n+1 & 1}$. Assume that $1 \in W_1$.  Then $|W_2 \cap S| = n-1$ and hence $W_2 \subset S$.  If $W_2 \cap \{r, r^{-1}\} \neq \emptyset$, say, $r \in W_2$, then $r$ has at least $n-3 \ge 10$ neighbours in $W_2$, which is a contradiction. So $\{r, r^{-1}\} \subset W_1$. Thus, $W_1 = \{1, r^{j_1}s\} \cup \{r^i, r^{-i}: 1 \le i \le (n-1)/2$ and $W_2 = \{r^j s: 0 \le j \le n-1, j \ne j_1\}$ for some $0 \le j_1 \le n-1$. Since $n \ge 13$, there exists $0 \le j \le n-1$ with $j \ne j_0$ such that $j \pm 1 \not \equiv j_0 \mod n$. Then $r^{j}s \in W_2$ has two neighbours in $W_2$, namely $r r^{j}s = r^{j+1}s$ and $r^{-1} r^{j}s = r^{j-1}s$, but this is a contradiction. Assume that $1 \in W_2$. Then $|W_2 \cap S| = 1$ and so $W_1 = S \setminus W_2$. If $W_2 \cap S = \{r\}$ or $\{r^{-1}\}$, then $r$ or $r^{-1}$ has at least two neighbours in $W_2$, a contradiction. If $W_2 \cap S = \{r^{j_1}s\}$ for some $0 \le j_1 \le n-1$, then $W_1 = S \setminus \{r^{j_1}s\}$ and $W_2 = \{1, r^{j_1}s\} \cup \{r^i, r^{-i}: 2 \le i \le (n-1)/2\}$. Similarly to the argument in the previous paragraph, we see that $r \in W_1$ has at least $n-2 \ge 11$ neighbours in $W_1$, a contradiction. Hence $(a, b) \neq (3,n+1)$. 

Finally, consider the case $(a, b) = (4,n+2)$. In this case we have $|W_1| = n+2$, $|W_2| = n-2$, and the quotient matrix of the partition $\{W_1,W_2\}$ is $\bmat{4 & n-2 \\ n+2 & 0}$. So $W_2$ is an independent set of $\Ga$. If $1 \in W_2$, then $W_1 = S$ and $r \in W_1$ has $n \ge 13$ neighbours $r^{j}s$ ($0 \le j \le n-1$) in $W_1$, a contradiction. If $1 \in W_1$, then $|W_2 \cap S| = n-2$ and so $W_2 \subset S$. If $r$ or $r^{-1}$ is in $W_2$, then it has at least $n-6 \ge 7$ neighbours in $W_2$, a contradiction. So $\{r, r^{-1}\} \subset W_1$ and one can show that each vertex in $W_2$ has at least $n-5 \ge 8$ neighbours in $W_2$, again a contradiction. Hence $(a,b) \neq (4,n+2)$.

In the remainder of this proof we assume that $3$ is a divisor of $n$ and $a-b = -1$. Our goal is to prove that $\Ga$ admits no $(b-1,b)$-perfect sets. Suppose for a contradiction that $W_1$ is a $(b-1,b)$-perfect set in $\Ga$. Then $(n+3)|W_1| = 2nb$ by \eqref{eq:abd}, so $n+3$ divides $2nb$ and $|W_1| = 2nb/(n+3)$. If $1 \le b < (n+3)/5$, then by part (b) of Theorem \ref{thm:nec-cond}, the multiplicity of $-1$ as an eigenvalue of $\Ga$ is at least $\lceil 2n/|W_1| \rceil - 1 = \lceil (n+3)/b \rceil - 1 > 4$, which is a contradiction. Hence $b \ge (n+3)/5$. Since $W_2$ is an $(n+2-b, n+3-b)$-perfect set in $\Ga$, the same argument applied to $W_2$ yields $n+3-b \ge (n+3)/5$. Therefore, $(n+3)/5 \le b \le 4(n+3)/5$. Since $n \ge 13$, we have $b > 3$, so $b \ge 6$ as $b$ is divisible by $3$. 

Set $T = \{r^i, r^{-i}: 1 \le i \le (n-1)/2\}$ and $R = \{r^j s: 0 \le j \le n-1\}$. Then $\Ga(1) = S = \{r, r^{-1}\} \cup R$. The subgraph $\Ga[\{1\} \cup T]$ of $\Ga$ induced by $\{1\} \cup T$ is the cycle $(1, r, r^2, \ldots, r^{n-1}, 1)$, the subgraph $\Ga[R]$ of $\Ga$ induced by $R$ is the cycle $(s, rs, r^2 s, \ldots, r^{n-1}s, s)$, and every vertex in $\{1\} \cup T$ is adjacent to every vertex in $R$.  

\smallskip
\textsf{Case 1.} $1 \in W_1$. In this case, we have $|W_1 \cap S| = b-1$ and $|W_2 \cap S| = n+3-b$. If $r \in W_1$ and $r^{-1} \in W_2$, then for any $r^i \in W_1 \cap T$, we have
\[1 + |W_1 \cap R| = |\Ga(1) \cap W_1| = |\Ga(r^i) \cap W_1| = |\Ga(r^i) \cap ((W_1 \cap T) \cup \{1\})| + |W_1 \cap R|\]
and so $|\Ga(r^i) \cap ((W_1 \cap T) \cup \{1\})| = 1$, and for any $r^i \in W_2 \cap T$,
\[n+2-b = |\Ga(r^i) \cap W_2| = |\{r^{i-1}, r^{i+1}\} \cap (W_2 \cap T)| + |W_2 \cap R|.\]
Since $\{1\} \cup T$ induces the cycle $(1, r, r^2, \ldots, r^{n-1}, 1)$, it follows that for each $i \equiv 0 \mod 4$ we have $r^i, r^{i+1} \in W_1$ and $r^{i+2}, r^{i+3} \in W_2$. Since $r^{n-1} \in W_2$, we must have $n \equiv 0 \mod 4$, which is a contradiction as $n$ is odd. If $r \in W_2$ and $r^{-1} \in W_1$, we can get the same contradiction by a similar argument. 

If $r \in W_1$ and $r^{-1} \in W_1$, then for any $r^i \in W_1 \cap T$, we have
\[2 + |W_1 \cap R| = |\Ga(1) \cap W_1| = |\Ga(r^i) \cap W_1| = |\Ga(r^i) \cap ((W_1 \cap T) \cup \{1\})| + |W_1 \cap R|\]
and so $|\Ga(r^i) \cap ((W_1 \cap T) \cup \{1\})| = 2$. Since $\{1\} \cup T$ induces the cycle $(1, r, r^2, \ldots, r^{n-1}, 1)$, it follows that $T \subseteq W_1$ and $|W_1 \cap R| = b-3 \ge 3$. So $W_2 \cap T = \emptyset$ and $W_2 \cap R = W_2 \neq \emptyset$. Since $\Ga[R]$ is an $n$-cycle and both $W_1 \cap R$ and $W_2 \cap R$ are nonempty, there exists an edge of $\Ga[R]$ joining a vertex $r^{i}s \in W_1 \cap R$ and a vertex $r^{j}s \in W_2 \cap R$. Therefore, $b-1 = |\Ga(r^i s) \cap W_1| = n +  |\Ga(r^i s) \cap W_1 \cap R| = n+1$, and $n+2-b = |\Ga(r^j s) \cap W_2 \cap R| = 1$, but these two equations contradict each other. 

If $r \in W_2$ and $r^{-1} \in W_2$, then 
$|W_1 \cap R| = |\Ga(1) \cap W_1| = b-1$.
Assume that $W_1 \cap T \neq \emptyset$. Then, for each $r^i \in W_1 \cap T$, we have $|\Ga(r^i) \cap W_1| = |\Ga(1) \cap W_1| = |W_1 \cap R|$ and hence $\Ga(r^i) \cap W_1 \cap T = \emptyset$. Thus, along the path segment $(r, r^2, \ldots, r^{n-1})$ of the cycle $(1, r, r^2, \ldots, r^{n-1}, 1)$ the vertices must be in $W_2 \cap T$ and $W_1 \cap T$ alternately, which implies that $n$ is even, a contradiction. Hence $W_1 \cap T = \emptyset$. So, each vertex in $W_1 \setminus \{1\} = W_1 \cap R$ has at most two neighbours in $W_1$, but $1$ has $b-1 \ge 5$ neighbours in $W_1$, again a contradiction. 

\smallskip

\textsf{Case 2.} $1 \in W_2$. In this case, we have $|W_2 \cap S| = n+2-b$ and $|W_1 \cap S| = b$. If $r \in W_1$ and $r^{-1} \in W_2$, then for any $r^i \in W_1\cap T$,
\[1 + |W_1 \cap R| = |\Ga(1) \cap W_1| = |\Ga(r^i) \cap W_1| = |\Ga(r^i) \cap ((W_1 \cap T) \cup \{1\})| + |W_1 \cap R|,\]
so $|\Ga(r^i) \cap ((W_1 \cap T) \cup \{1\})| = 1$, and for any $r^i \in W_2 \cap T$,
\[n+2-b = |\Ga(r^i) \cap W_2| = |\{r^{i-1}, r^{i+1}\} \cap W_2 \cap T)| + |W_2 \cap R|.\]
Similarly to the case when $1 \in W_1$, $r \in W_1$ and $r^{-1} \in W_2$, one can show that $n \equiv 0 \mod 4$, which is a contradiction as $n$ is odd. The same contradiction can be reached if $r \in W_2$ and $r^{-1} \in W_1$. 

If $r \in W_2$ and $r^{-1} \in W_2$, then for any $r^i \in W_2 \cap T$,
\[2 + |W_2 \cap R| = |\Ga(1) \cap W_2| = |\Ga(r^i) \cap W_2| = |\{r^{i-1}, r^{i+1}\} \cap (W_2 \cap T)| + |W_2 \cap R|\]
and hence $\{r^{i-1}, r^{i+1}\} \subseteq W_2 \cap T$. Since $\Ga[\{1\} \cup T]$ is the cycle $(1, r, r^2, \ldots, r^{n-1}, 1)$, it follows that $W_2 \cap T = T$. So $T \subseteq W_2$ and $W_1 \cap T = \emptyset$. Since $\Ga[R]$ is a cycle, for each vertex $r^{i}s \in W_1 \cap R$, we have $b - 1 = |\Ga(r^i s) \cap W_1 \cap R| \le 2$, which contradicts the fact that $b \ge 6$. 

If $r \in W_1$ and $r^{-1} \in W_1$, then $|W_2 \cap R| = |\Ga(1) \cap W_2|$. Since $(1, r, r^2, \ldots, r^{n-1}, 1)$ is a cycle in $\Ga$ and both $r$ and $r^{-1}$ are in $W_1$, if $W_2 \cap T \neq \emptyset$, then there exists a vertex $r^i \in W_2 \cap T$ such that $|\Ga(r^i) \cap W_2 \cap T| = 1$, so $|\Ga(r^i) \cap W_2| = |W_2 \cap R|+1 \neq |\Ga(1) \cap W_2|$, which is a contradiction. Hence $W_2 \cap T = \emptyset$. That is, $T \subseteq W_1$, and hence all vertices in $T$ should have the same number of neighbours in $W_1$. However, $|\Ga(r) \cap W_1| = |\Ga(r^{-1}) \cap W_1| = |W_1 \cap R| + 1$ but $|\Ga(r^i) \cap W_1| = |W_1 \cap R| + 2$ for $r^i \in T \setminus \{r, r^{-1}\}$, a contradiction. 
\qed
\end{proof} 

\smallskip
\noindent \textbf{Acknowledgement}~~The third author was supported by a Discovery Project (DP250104965) of the Australian Research Council.


{\small

\begin{thebibliography}{99} 

\bibitem{BCGG2019}
R. A. Bailey, P. J. Cameron, A. L. Gavrilyuk and S. V. Goryainov, Equitable partitions of Latin-square graphs, {\em J. Combin. Des.} 27 (2019), no. 3, 142--160.

\bibitem{BCFFN24}
R. A. Bailey, P. J. Cameron, D. Ferreira, S. S. Ferreira and C. Nunes, Designs for half-diallel experiments with commutative orthogonal block structure, {\em J. Statist. Plann. Inference} 231 (2024), 106139.

\bibitem{BKMV2021}
E. A. Bespalov, D. S. Krotov, A. A. Matiushev and K. V. Vorob'ev, Perfect 2-colorings of Hamming graphs, {\em J Combin Des.} 29 (2021), no. 6, 367--396.

\bibitem{Big}
N. L. Biggs, Perfect codes in graphs, {\em J. Combin. Theory Ser. B} 15 (1973) 289--296.

\bibitem{Cardoso2019}
D. M. Cardoso, An overview of $(\kappa, \tau)$-regular sets and their applications, {\em Discrete Appl. Math.} 269 (2019) 2--10.

\bibitem{CWX2020}
J. Chen, Y. Wang and B. Xia, Characterization of subgroup perfect codes in Cayley graphs, {\em Discrete Math.} 343 (2020), no. 5, 111813.

\bibitem{DS03}
I. J. Dejter and O. Serra, Efficient dominating sets in Cayley graphs, 
\textit{Discrete Appl. Math.} 129 (2003) 319--328.

\bibitem{DS81} P. Diaconis and M. Shahshahani, Generating a random permutation with random transpositions, \emph{Z. Wahrscheinlichkeitstheor. Verw. Geb.} 57(2) (1981) 159--179.

\bibitem{E87}
G. Etienne, Perfect codes and regular partitions in graphs and groups, {\em European J. Combin.} 8 (1987), no. 2, 139--144.

\bibitem{Feit}
W. Feit, \emph{Characters of Finite Groups}, Benjamin, 1967. 

\bibitem{FHZ}
R. Feng, H. Huang and S. Zhou, Perfect codes in circulant graphs, {\em Discret. Math.} 340 (2017) 1522--1527.

\bibitem{F2007}
D. G. Fon-Der-Flaas, 
Perfect 2-colorings of a hypercube, {\em Siberian Math. J.} 48 (2007), no. 4, 740--745. (Translated from {\em Sibirsk. Mat. Zh.}, 48 (2007), no. 4, 923--930.)

\bibitem{GG2013}
A. L. Gavrilyuk and S. V. Goryainov, On perfect 2-colorings of Johnson graphs $J(v,3)$, {\em J. Combin. Des.} 21 (2013), no. 6, 232--252.

\bibitem{G1993}
C. Godsil, {\em Algebraic Combinatorics}, Chapman and Hall, New York, 1993.

\bibitem{GR}
C. Godsil and G. Royle, {\em Algebraic Graph Theory}, Springer-Verlag, New York, 2001. 

\bibitem{HHS98}
T. W. Haynes, S. T. Hedetniemi and P. Slater, {\em Fundamentals of Domination in Graphs}, Marcel Dekker, Inc., New York, 1998.

\bibitem{Heden1}
O. Heden, A survey of perfect codes, {\em Adv. Math. Commun.} 2 (2008) 223--247.

\bibitem{HXZ18}
H. Huang, B. Xia and S. Zhou, Perfect codes in Cayley graphs, {\em SIAM J. Discrete Math.} 32 (2018) 548--559.

\bibitem{K11}
D. S. Krotov, On weight distributions of perfect colorings and completely regular codes, {\em Des. Codes Cryptogr.} 61 (2011), no. 3, 315--329.

\bibitem{KP25}
D. S. Krotov and V. N. Potapov, Completely regular codes and equitable partitions, in: {\em Completely Regular Codes in Distance Regular Graphs}, 1--84, Monogr. Res. Notes Math., CRC Press, Boca Raton, FL, 2025.

\bibitem{Lee01}
J. Lee, Independent perfect domination sets in Cayley graphs, {\em J. Graph Theory} 37 (2001) 213--219.

\bibitem{LZ2022}
X. Liu, S. Zhou, Eigenvalues of Cayley graphs, \emph{Electron. J. Combin.} 29 (2) (2022) Paper No. 2.9, 164 pp.

\bibitem{MWWZ2019}
X. Ma, G. L. Walls, K. Wang and S. Zhou, Subgroup perfect codes in Cayley graphs, {\em SIAM J. Discrete Math.} 34 (2020), no.3, 1909--1921.

\bibitem{M03}
A. Meyerowitz, Cycle-balance conditions for distance-regular graphs, {\em Discrete Math.} 264 (2003), no. 1-3, 149--165.

\bibitem{MV2020}
I. Mogilnykh and A. Valyuzhenich, Equitable 2-partitions of the Hamming graphs with the second eigenvalue, {\em Discrete Math.} 343 (2020), no. 11, 112039.

\bibitem{Neu92}
A. Neumaier, Completely regular codes, {\em Discrete Math.} 106/107 (1992) 353--360.

\bibitem{RT1966}
O. Rothaus and J. G. Thompson, A combinatorial problem in the symmetric group, {\em Pacific J. Math.} 18 (1966) 175--178.

\bibitem{SS2009}
S. Szab\'{o} and A. Sands, {\em Factoring Groups into Subsets}, CRC Press, Boca Raton, FL, 2009.

\bibitem{WXZ23}
Y. Wang, B. Xia and S. Zhou, Regular sets in Cayley graphs, \emph{J. Algebraic Combin.} 57 (2023), no. 2, 547--558.

\bibitem{WXZ24}
X. Wang, S-J. Xu and S. Zhou, On regular sets in Cayley graphs, \emph{J. Algebraic Combin.} 59 (2024), no. 3, 735--759.

\bibitem{Va75}
J. H. van Lint, A survey of perfect codes, {\em Rocky Mountain J. Math.} 5 (1975) 199--224.

\bibitem{ZZ2021} 
J. Zhang and S. Zhou, On subgroup perfect codes in Cayley graphs, {\em European J. Combin.} 91 (2021) 103228. (Corrigenda:  \emph{European J. Combin.} 101 (2022) 103461)

\bibitem{Zhou2016}
S. Zhou, Total perfect codes in Cayley graphs, {\em Des. Codes Cryptogr.} 81 (2016), no. 3, 489--504.

\bibitem{Z15}
S. Zhou, Cyclotomic graphs and perfect codes, {\em J. Pure Appl. Algebra} 223 (2019) 931--947.

\bibitem{Z88} 
P. H. Zieschang, Cayley graphs of finite groups, \emph{J. Algebra} 118(2) (1988) 447--454.
\end{thebibliography}
}
\end{document}